\providecommand{\rhomax}[0]{\rho^{\max}}
\providecommand{\colvector}[1]{\left( \begin{array}{c} #1 \end{array} \right)}
\providecommand{\abs}[1]{\lvert#1\rvert}
\providecommand{\vmax}[0]{v^{\max}}
\DeclareMathOperator*{\ADPARZ}{AP}
\DeclareMathOperator*{\nice12}{\sfrac{1}{2}}
\newlength\fwidth
\pgfplotsset{compat=newest} 
\pgfplotsset{plot coordinates/math parser=false}
\pgfplotsset{compat = 1.3}
\newtheorem{definition}[]{Definition}[]
\newtheorem{proposition}[]{Proposition}[]
\newtheorem{lemma}[]{Lemma}[]
\begin{document}
 
\title{Second-order traffic flow models  on networks }

\author{Simone G\"ottlich\footnotemark[1]  \; Michael Herty\footnotemark[2] \; Salissou Moutari\footnotemark[3] \; Jennifer Wei{\ss}en\footnotemark[1]}

\footnotetext[1]{University of Mannheim, Department of Mathematics, 68131 Mannheim, Germany (goettlich@uni-mannheim.de, jennifer.weissen@uni-mannheim.de)}
\footnotetext[2]{RWTH Aachen University, IGPM, 52064 Aachen, Germany (herty@igpm.rwth-aachen.de)}
\footnotetext[3]{Queen's University Belfast, BT7 1NN Belfast, United Kingdom (s.moutari@qub.ac.uk)}

\maketitle

\begin{abstract}
	This paper deals with the Aw-Rascle-Zhang model for traffic flow on uni-directional road networks. For the conservation of the mass and the generalized momentum, we construct weak solutions for Riemann problems at the junctions. 
	We particularly focus on a novel approximation to the homogenized pressure by introducing an additional equation for the propagation of a reference pressure. The resulting system of coupled conservation laws is then solved using an 
	appropriate numerical scheme of Godunov type. Numerical simulations show that the proposed approximation is able to approximate the  homogenized pressure sufficiently well. The difference of the new approach compared with the Lighthill-Whitham-Richards model is also illustrated.
\end{abstract}

{\bf AMS Classification.} 35L65, 90B20, 65M08  	 

{\bf Keywords.} Conservation laws on networks, Aw-Rascle-Zhang model, homogenized pressure     


	\section{Introduction}
	
	Macroscopic modeling of traffic flow has been of research interest over the past decades, since the first order Lighthill-Whitham-Richards (LWR) model \cite{LigWhi1955,Ric1956} 
	\begin{align}\label{lwr}
	\partial_t \rho + \partial_x ( \rho V(\rho) )&= 0
	\end{align}
	was introduced in the middle of the 20th century. Aw-Rascle-Zhang~\cite{AwRas2000,Zha2002} proposed a second order traffic flow model (ARZ model), which accounts for a more detailed description of traffic phenomena. 
	The ARZ model is given by
	\begin{equation}
	\begin{split}\label{eq:ARZ}
	\partial_t \rho + \partial_x (\rho v) &= 0 \\
	\partial_t (\rho w) + \partial_x (\rho v w) &=0. 
	\end{split}
	\end{equation}
	It combines a conservation law for the density $\rho$ with an additional conservation law for the mean traffic speed  $v$. The Lagrangian marker $w$ is given by 
	\begin{align}\label{eq:pressure} 
	w = v + p(\rho),
	\end{align}  where $p$ is a pressure function. The LWR model can be viewed as a special case of the second order model, where the speed of traffic is always at the level of the equilibrium speed $V(\rho)$ and all the drivers possess the {\bf  same} Lagrangian marker $w = V(\rho) + p(\rho) =w_0$. 
	In comparison to the LWR model, which is in general not able to capture traffic instabilities, the ARZ model enables to portray traffic phenomena such as growing traffic waves and the capacity drop effect at junctions~\cite{TreKes2013}.
	
	Coupling or boundary conditions are required to extend equations \ref{lwr} or \eqref{eq:ARZ} to models for traffic flow on  road networks. The appropriate junction modelling  is of utmost importance to model traffic dynamics and it has been the focus of recent research in the field, see e.g.  \cite{BreCanGar2014,GarHanPic2016, trafficflowonnetworks}.  The proposed  models for traffic are generally based on the conservation of mass at the junctions. At the same time, further conditions are imposed to obtain a unique boundary condition. As an example, the non--negative flux $\rho v$ at the junction is distributed according to given ratios and the total flux through the junction is maximized according to the specified ratios.  While many studies, including ~\cite{CocGarPic2005,GarHanPic2016,trafficflowonnetworks,GoaGoeKol2016, HerKla2003, HolRis1995}, use the LWR model \ref{lwr} to investigate traffic modelling on road networks, we are interested in models based on the ARZ model \eqref{eq:ARZ}. Further, we 
	are interested in a description in terms of the so-called  macroscopic quantities $(\rho, \rho v).$ Since macroscopic models are known to be coarse in the description of traffic dynamics, especially at junctions,  the maximization of the flux at junctions may sometimes lead to unrealistically high values of flux $\rho v$.  To address these shortcomings, hybrid models, providing a more accurate description of traffic dynamics at the junctions, have been introduced~\cite{MR3886680,MouHer2009, WeiKolGoe2019}. These models are computationally more expensive and we refer to the cited literature for a more detailed discussion. 
	
	Here, we are interested in junction models for the ARZ model. Conditions have been considered for example in~\cite{BuliXing2020,GarHanPic2016,trafficflowonnetworks, HauBas2007, HerMouRas2006, HerRas2006, KolCosGoa2018, KolGoeGoa2017}.  A detailed review  would be beyond the scope of the paper, however, note that the Riemann solver at the junction proposed in~\cite{GarPic2006} does not conserve the pseudo momentum and therefore does not yield a weak solution to the ARZ model~\eqref{eq:ARZ}. Except for \cite{HerMouRas2006,HerRas2006},  {\bf all} proposed coupling conditions ~\cite{HauBas2007, KolCosGoa2018, KolGoeGoa2017} use  a fixed pressure function $p=p(\rho)$, see equation~\eqref{eq:pressure} on each road of the network.  The coupling conditions therein conserve both the mass and a (predefined) mixture of the Lagrangian attribute $w$. Specifically, the case of a 2-1 merging junction, where the Riemann solver is based on fixed assigned coefficients, was presented~\cite{KolCosGoa2018}. The flux through the junction was maximized through a multi-objective maximization of the incoming fluxes.  Other examples \cite{GarPic2006} also propose conversation of mass and a mixture of the Lagrangian marker. However, as outlined in \cite{HerRas2006}, those solutions
	are {\bf not} consistent with the Lagrangian form of equation \eqref{eq:ARZ}. Without entering the discussion in detail, the crucial point is that any change in the
	value of $w$ induces also a change in the corresponding pressure $p$ according to equation \eqref{eq:pressure}. This leads to coupling conditions at junctions 
	that not only prescribe coupling or boundary conditions in terms of $(\rho,\rho v)$ but {\bf also} in the form of the pressure $p,$ see e.g.~\cite{HerMouRas2006,HerRas2006}. This interpretation is also consistent with a discretization in Lagrangian coordinates leading to the so--called follow-the-leader
	model \cite{AwKlaMat2002}.  In order to pass from a microscopic to a macroscopic limit, only low regularity is required on the Lagrangian marker $w$. The 
	homogenization limit of the system \eqref{eq:ARZ} was investigated in~\cite{BagRas2003}, where the homogenized relationship between the traffic density, velocity and Lagrangian marker through the pressure function was established. In~\cite{HerMouRas2006, HerRas2006}, coupling conditions for the pressure using the  homogenized pressure have been established. Therein, the derivation of the homogenized pressure for the 2-1 merge junction and the situation where the initial pressure $p(\rho)$ is given by $p(\rho) = \rho$ has been illustrated. It should also be noted that whenever the incoming $w$-values or the mixture rule changes, the homogenized pressure  changes. This leads to computational challenges and to the best of our knowledge, an explicit formula to determine the adapted pressure and a numerical scheme complying with these changes are yet to be established. In this paper we propose a numerical approximation to the homogenized pressure in order to obtain a computable but still consistent traffic flow model based on the ARZ equations \eqref{eq:ARZ}.
	
	This paper focuses on the following three important aspects. First and in contrast with~\cite{GarPic2006}, the solution proposed in this study is a weak solution of the network problem and therefore satisfies the conservation of mass and generalized momentum. Thus, the quantities $\rho v$ and $\rho w v$ are conserved through the junction. Second, compared to ~\cite{HauBas2007,KolCosGoa2018,KolGoeGoa2017}, this study considers the homogenization of the pressure at the junction and, hence   it   is consistent with the formulation of the model in Lagrangian coordinates and to the microscopic follow-the-leader model.  Finally, in contrast   ~\cite{HerMouRas2006,HerRas2006}, we present an approach for approximating the homogenized pressure.  Such an approach allows for numerical simulation of traffic in an entire road network and the corresponding computational results are presented. 
	
	After the presentation of the approximation of the homogenized pressure, we discuss the properties of the homogenized system and construct the demand and supply functions. These are used to determine the flux at the junction and allow for an appropriate description of the boundary conditions. Using the approximation of the homogenized pressure in the supply function, admissible states at the junction are defined and the network solution is constructed. We provide a suitable numerical scheme for simulating traffic on road networks, which is easy to implement and based on a Godunov discretization. It is well known, that the classical Godunov scheme produces nonphysical oscillations near contact discontinuities~\cite{ChaGoa2007}. These oscillations then lead to numerical solutions that do not precisely capture the Riemann invariants of the system~\eqref{eq:ARZ}. Since an accurate description of the Riemann invariants is of paramount importance for our network model, we leverage the non-conservative scheme from~\cite{ChaGoa2007}, which was specifically designed for the system~\eqref{eq:ARZ}, and adapt it to our network model. The numerical examples show that the solution is sufficiently close to the homogenized solution.
	
	The outline of the paper is as follows: In Section~\ref{sec:overview_coupling}, we exemplify some coupling conditions for the LWR and the AR models, and we define weak entropy solutions for the corresponding Riemann problem. As in \cite{HerRas2006}, the need for the homogenization of the pressure function after merge type junctions is shown. In Section~\ref{sec:pressurederivation}, we provide a simple and suitable approximation for the adapted pressure function, and we show that with our approach, we can indeed approximate the homogenized pressure and the correct homogenized solution. The approximation is then generalized for the general $n$-$m$-junction and the quality of the solution is assessed  through a comparison of the flux with true and approximated homogenized pressure for the 2-1-junction. A numerical scheme, which accounts for the variation of the pressure, is introduced in Section~\ref{sec:numscheme}. Finally, we  compare our network solution against the solution with the homogenized pressure in Section~\ref{sec:computationalresults}. Furthermore, the numerical results highlight a more accurate description of traffic dynamics using the proposed approach compared to the LWR network model.
	
	
	\section{Coupling conditions for traffic flow networks} \label{sec:overview_coupling}

	A road network is modeled as a directed graph $\mathcal{G} = (V, E)$. Each edge  $i \in E$ corresponds to a road, which is modeled as an interval $I_i = [a_i, b_i]$ with length $L_i = b_i -a_i$. The junctions are represented by the nodes $k \in V$. For a given junction $k$, let $\delta_{k}^-$ (resp. $\delta_{k}^+$) denote the set of indices representing incoming (resp. outgoing) roads to (resp. from) the junction $k$. On each road $i \in E$ of the network, a traffic network model given by \ref{lwr} or \eqref{eq:ARZ} is required to hold. Furthermore, some initial data are assumed to be described on each road. Coupling conditions at the junctions are imposed to define suitable boundary conditions for the traffic model at hand. Those conditions will be discussed in detail in the following section. 
	
	\subsection{The Lighthill-Whitham-Richards model}
	On each road $i \in E$ of the network, we require the following equation to hold:
	\begin{align}
	\partial_t \rho_i  + \partial_x \left( \rho_i V_i(\rho_i) \right)  = 0. \label{eq:LWR_network}
	\end{align}
	The traffic density and velocity on road $i$ are denoted $\rho_i = \rho_i(x,t)$ and $v_i = V(\rho_i(x,t))$, respectively. The velocity is a function of the density and for a maximum density $\rhomax_i$ it holds that $V_i(\rhomax_i) = 0$. Moreover, the flux $\rho V_i(\rho)$ is strictly concave and has a unique maximum $\sigma_i$. For a given junction $k$, let $ \lbrace \Phi_i \rbrace_{i \in (\delta_{k}^- \cup \delta_{k}^+)}$ denote a family of smooth test functions, where $\Phi_i: I_i \times [0, + \infty] \rightarrow \mathbb{R}^2 $ has a compact support in $I_i$ and is also smooth across the junction, i.e., 
	\begin{align*}
	\Phi_i( b_i, \cdot ) = \Phi_j( a_j, \cdot ) \qquad \forall i \in \delta_{k}^-, \forall j \in \delta_{k}^+.
	\end{align*}
	A set of functions $\rho_i, i \in (\delta_{k}^- \cup \delta_{k}^+)$,  is called a weak solution of~\eqref{eq:LWR_network} at the junction $k$ if, for all families of test functions smooth across the junction, the following equation holds:
	\begin{align} \label{eq:networkproblem1_LWR}
	\sum_{i \in (\delta_{k}^- \cup \delta_{k}^+)} \left(\int_{0}^{\infty} \int_{a_i}^{b_i} \left[\rho_i  \cdot  \partial_t \Phi_i + \rho_i v_i  \cdot \partial_x \Phi_i \right] \mathrm{d}x \mathrm{d }t- \int_{a_i}^{b_i} \rho_{i,0}  \cdot \Phi_i(0,x) \mathrm{d }x \right) = 0.
	\end{align}
	In the above equation, $\rho_{i,0}$ denotes the initial data. First, we provide a discussion of the Riemann problem at a single junction $k$, located at $x=0$. For each road $i \in (\delta_{k}^+ \cup \delta_{k}^-)$, we consider the following (half-)Riemann problem \cite{HerRas2006}: 
	\begin{align} \label{eq:LWR_RP_network}
	\begin{cases}
	\partial_t \rho_i  + \partial_x  ( \rho_i v_i )  = 0 \\
	\rho_i(x,0) = \begin{pmatrix} \rho_i^+ & \mbox{ for } x>0 \\ \rho_i^- & \mbox{ for } x \leq 0. \end{pmatrix}. 	
	\end{cases}
	\end{align}
	Depending on whether the road is incoming or outgoing, only one of the Riemann data is defined for $t=0$. If $i \in \delta_{k}^-$, then $\rho_i^- = \rho_{i,0}, b_i = 0$ and if $i \in \delta_{k}^+$, then $\rho_i^+ = \rho_{i,0},$ which will be  the case $ a_i = 0$. The other datum is defined by the solution through some suitable coupling conditions  be discussed below.

	\subsubsection{Nodal conditions for the LWR model}
	
	In this section, we discuss the construction of weak entropy solutions for the Riemann problem~\eqref{eq:LWR_RP_network} for traffic networks. On each road, we consider the LWR model. To define the solution at the junction, we consider a Riemann solver giving an admissible, weak entropy solution, see also~\cite{CocGarPic2005, GarHanPic2016, trafficflowonnetworks, HolRis1995}. The entropy criterion is expressed by a demand and supply formulation for admissible flux values. We impose additional conditions on the flux distribution from incoming to outgoing roads in the network. Using a flux maximization with respect to the additional conditions, we obtain the network solution. In the following, we specify the constraints on the network solution. From~\eqref{eq:networkproblem1_LWR}, a weak solution satisfies the Kirchhoff condition 
	\begin{align} \label{eq:RankineHugoniot_mass}
	\sum_{i \in \delta_{k}^-} \underbrace{(\rho_i v_i)(0-,t)}_{:=q_i} = \sum_{j \in \delta_{k}^+} \underbrace{(\rho_j v_j)(0+,t)}_{:=q_j}.
	\end{align}
	Let $q_{ji} \in \mathbb{R}_{\geq 0}$  for $j \in \delta_{k}^+, i \in \delta_{k}^-$ denote the initially unknown flux coming from road $i$ and entering road $j$ and denote by $q_i$ and $q_j$ the total fluxes at the junction: 
	\begin{align*}
	q_i = \sum_{j \in \delta_{k}^+} q_{ji}, \qquad& q_j = \sum_{i \in \delta_{k}^-} q_{ji}.
	\end{align*}
	
	The following constraint (H1) specifies the admissible fluxes for entropy solutions. Conditions for the flux distributions between different roads of the network will be given in (H2)-(H3) below.

	\textbf{(H1)}
	The fluxes at the junction are bounded by demand $d_i$ and supply $s_j$
	\begin{align}
	0 \leq q_i \leq d_i(p_i) \quad \forall i \in \delta_{k}^-, \qquad 0 \leq q_j \leq s_j(\rho_j) \quad \forall j \in \delta_{k}^+,
	\end{align}
	where the demand, $d_i(\rho)$, and the supply, $s_i(\rho)$, on road $i$ are defined as follows:
	\begin{align} \label{eq:LWR_demandsupply}
	d_i(\rho) =\begin{cases}
	\rho V_i(\rho) &\text{ if } \rho \leq \sigma_i \\
	\sigma_i V_i(\sigma_i)  &\text{ if } \rho > \sigma_i 
	\end{cases}, \qquad
	s_i(\rho) =\begin{cases}
	\sigma_i V_i(\sigma_i) &\text{ if } \rho \leq \sigma_i \\
	\rho V_i(\rho)  &\text{ if } \rho > \sigma_i 
	\end{cases}
	\end{align}

	\textbf{(H2)} Consider a junction with $n$ incoming and $m$ outgoing roads. As in \cite{CocGarPic2005,GarHanPic2016,trafficflowonnetworks,HerRas2006,HolRis1995}, a traffic distribution matrix $A = (\alpha_{ji})_{i \in \delta_{k}^-, j \in \delta_{k}^+}$ is assumed to be known. It describes the distribution of traffic at the junction,  where $0 \leq \alpha_{ji} \leq 1$ denotes the percentage of cars on road $i$ willing to go to road $j$ and $\sum_{j \in \delta_{k}^+} \alpha_{ji} = 1$. The fluxes at the junction must  satisfy the following equality  $q_{ji} = \alpha_{ji} q_i.$

	For junctions with more than one incoming road, conditions (H1)-(H2) are not sufficient to determine unique flux values at the junction. A further constraint has to be introduced to single out a solution. Here, we impose the following mixture rule, see also~\cite{trafficflowonnetworks,HerRas2006}.

	\textbf{(H3)}
	On an outgoing road $j$, the fluxes $\vec{q_i} = ({q_i})_{i \in \delta_{k}^-} $ are proportional to a given priority vector $\vec{\beta} =(\beta_{ij})_{i \in \delta_{k}^-}$. Hence, we impose the following constraint:
	\begin{align}\label{eq:mixturerule_junction}
	\vec{q_i} = z \vec{\beta}, \qquad z \in \mathbb{R}.
	\end{align}
	
	We require the network solution to be  the flux maximizing weak solution, subject to constraints (H1)-(H3). 
	The existence of the network solution 
	is shown in the references~\cite{trafficflowonnetworks, HolRis1995}.

	\subsection{The Aw--Rascle--Zhang model}
	
	On each road of the network, we require the following system to hold:
	\begin{align}
	\partial_t \colvector{\rho_i \\ \rho_i w_i} + \partial_x \colvector{\rho_i v_i \\ \rho_i w_i v_i} = 0, \label{eq:ARZ_network}
	\end{align}
	where $\rho_i$  and $v_i$ are the density and the velocity on road $i$, respectively. The Lagrangian marker, $w_i$, is defined by $w_i = v_i + p_i(\rho_i).$ The form of the pressure function $p_i$ depends on the initial data and the type of the junction. For each $i$, $p_{i,0}(\rho)$ is a pressure function, which is initially given and for which the flux $\rho (w-p_{i,0}(\rho))$ has a unique maximum $\sigma_{i,0}$.  A set of functions $U_i = (\rho_i, w_i), i \in (\delta_{k}^- \cup \delta_{k}^+)$ is called a weak solution of~\eqref{eq:ARZ} at the junction $k$ if, for all families of test functions smooth across the junction, the following equation holds:

	\begin{align} 
	\begin{split} \label{eq:networkproblem1}
	\sum_{i \in (\delta_{k}^- \cup \delta_{k}^+)} \Bigg ( \int_{0}^{\infty} \int_{a_i}^{b_i} \left[\colvector{\rho_i \\ \rho_i w_i} \cdot  \partial_t \Phi_i + \colvector{\rho_i v_i \\ \rho_i v_i w_i} \cdot \partial_x \Phi_i \right] \mathrm{d}x \mathrm{d }t \\
	- \int_{a_i}^{b_i} \colvector{\rho_{i,0} \\ \rho_{i,0} w_{i,0}} \cdot \Phi_i(0,x) \mathrm{d }x \Bigg ) = 0.
	\end{split}
	\end{align}
	
	We refer to \cite{AwRas2000} for an analysis of the model \eqref{eq:ARZ_network}. As before, for each $i \in (\delta_{k}^+ \cup \delta_{k}^-)$, we consider the following (half-)Riemann problem:
	\begin{align} \label{eq:ARZ_RP_network}
	\begin{cases}
	\partial_t \colvector{\rho_i \\ \rho_i w_i } + \partial_x \colvector{ \rho_i v_i \\ \rho_i w_i v_i } = 0, \\
	(\rho_i, w_i) = \begin{pmatrix} (\rho_i^+,w_i^+) & \mbox{ for } x>0 \\ (\rho_i^-, w_i^-) & \mbox{ for } x \leq 0. \end{pmatrix}. \\
	\end{cases}
	\end{align}
	As in the case of the LWR model, depending on whether the road is incoming or outgoing, only one of the Riemann data is defined for $t=0$. In the following, we denote with $U = (\rho, \rho w)$ the traffic state defined by the density $\rho$ and the Lagrangian marker $w$.
	
	\subsubsection{Nodal conditions for the ARZ model}
	
	In this section we state conditions for a network solution to the  ARZ model.  To define a unique network solution for the Riemann problem~\eqref{eq:ARZ_RP_network} of the $2 \times 2$ system~\eqref{eq:ARZ}, additional conditions compared 
	to the Riemann problem of the scalar conservation law~\eqref{eq:LWR_RP_network},  are necessary, see also~\cite{HauBas2007,HerMouRas2006, HerRas2006, KolCosGoa2018}. From~\eqref{eq:networkproblem1}, a weak solution for the ARZ model is required to satisfy the Kirchhoff condition for the conservation of mass~\eqref{eq:RankineHugoniot_mass} as well as momentum: 
	\begin{align}
	\sum_{i \in \delta_{k}^-} (\rho_i v_i w_i)(0-,t) = \sum_{j \in \delta_{k}^+} (\rho_j v_j w_j)(0+,t). \label{eq:conservation_pseudomomentum}
	\end{align}
	
	Additionally, the homogenization of the Lagrangian marker given by the rule (H4) below has to be considered~\cite{HerMouRas2006,HerRas2006}.

	\textbf{(H4)}
	The homogenized $w$-value on an outgoing road $j$ is motivated by the underlying microscopic model, see section 6 in~\cite{HerRas2006}. Cars passing trough the junction have the average property $\overline{w_j}$ associated with the Young measure $\mu_x$ describing the mixture of cars. Once the proportions $\beta_{ij} = q_{ji}/q_j$ are known, the homogenized Lagrangian marker $\overline{w_j}$ is given by
	\begin{align}\label{eq:barw_j}
	\overline{w_j} &:= \int w \mathrm{d }\mu_x(w) = \sum_{i \in \delta_{v}^-} \frac{q_{ji}}{q_j} w_i(U_{i,0}) = \vec{\beta} {\vec{w}}^T ~~\forall j \in \delta_{k}^+.
	\end{align}
	The unique weak entropy solution of~\eqref{eq:ARZ_network} is characterized by the relation
	\begin{align}
	\tau = \sum_{i \in \delta_{k}^-} \frac{q_{ji}}{q_j} P_j^{-1}(w_{i,0} -v) =: (P_j^*)^{-1}(\overline{w_j}-v), \label{eq:adaptedpress1}
	\end{align}
	see~\cite{HerRas2006}. The pressure $P_j^*(\tau)$ is, in fact, redefined such that for each $\tau$, the velocity $v$ is the velocity of the weak entropy solution of the Lagrangian formulation of the ARZ equations. This solution is then equivalent to the solution of the ARZ equations ~\eqref{eq:ARZ_network}. The modified pressure function and the Lagrangian marker are defined as follows:
	\begin{align} 
	p_j^*(\rho) = P_j^*(1/\rho), \label{eq:adaptedpress2} \\
	w_j(U) = \overline{w_j} =  v +p^{*}_j(\rho). \label{eq:adaptedpress3}
	\end{align}
	The new pressure function corresponds to  the homogenized Lagrangian marker $\overline{w_j}$, and for different values $w \neq \overline{w_j}$, a different pressure $p^*$ is obtained. The construction of the new pressure function is well-defined once the proportions $\frac{q_{ji}}{q_j}$ of the incoming fluxes are known. 
	The total fluxes $q_i$ and $q_j$ fulfill similar bounds in terms of  the demand and supply as in the case of the LWR model.  More precisely, we replace (H1)
	by
	
	\textbf{(H1*)}
	The fluxes at the junction are bounded by the demand $d_i$ and the supply~ $s_j$
	\begin{align} \label{eq:network_demandandsupplyjunction}
	0 \leq q_i \leq d_i(p_i;\rho_{i,0},w_{i,0}) \quad \forall i \in \delta_{k}^- \qquad 0 \leq q_j \leq s_j( p^{*}_j; \tilde{\rho_j}, \overline{w_j}) \quad \forall j \in \delta_{k}^+.
	\end{align}
	Here, the density  $\tilde{\rho_j}$ are defined below in Section \ref{sec:numscheme}. The value $\overline{w_j}$ is the homogenized Lagrangian marker
	given by equation \eqref{eq:barw_j}. Furthermore, the pressure $p^*_j$ is the homogenized pressure given by equation \eqref{eq:adaptedpress2}.
	
	The demand and supply functions for the ARZ model are given by
	\begin{align*}
	&
	d_i(p;\rho, w) =\begin{cases}
	\rho (w - p(\rho)) &\text{ if } \rho \leq \sigma_i(w) \\
	\sigma_i(w)(w- p(\sigma_i(w)))  &\text{ if } \rho > \sigma_i(w) 
	\end{cases},
	\\
	& 
	s_i(p;\rho, v) =\begin{cases}
	\sigma_i(w) (w - p(\sigma_i(w))) &\text{~~if } \rho \leq \sigma_i(w) \\
	\rho (w- p(\rho))  &\text{~~if } \rho > \sigma_i(w)
	\end{cases},
	\end{align*}
	where $\sigma_i(w)$ is the unique maximum of the flux $\rho (w-p_i(\rho))$. For a given state $U =(\rho, \rho w)$, the demand on the incoming road is evaluated using $w(U) = v +p_i(\rho)$ whereas the supply on the outgoing road uses $w(U) = v + p^*_j(\rho)$. 
	
	We refer to the reference~\cite{HerRas2006} (section 7, Theorem 7.1) for the proof of existence and uniqueness of the network solution. 
	
	
	\section{Approximation of homogenized pressure} \label{sec:pressurederivation}
	
	Suitable conditions for finding the network solution 
	have been proposed in \cite{HerRas2006}, where a full discussion of the Riemann problem was presented. It is assumed that the proportions $q_{ji}/q_j$ of the incoming fluxes at the junction are known and fixed. The homogenized flow on parts of the outgoing roads is derived. However, even in the case where initially Riemann data  are available on all roads, the pressure law on the outgoing roads changes over time. Further,  the adapted pressure function defined by~\eqref{eq:adaptedpress2} is not obvious to find, due to the nonlinear and implicit relationship between \eqref{eq:barw_j} and \eqref{eq:adaptedpress1}. The coupling conditions suggested by \cite{HerMouRas2006,HerRas2006} require the adapted pressure function to be recalculated for any  change in the homogenized Lagrangian marker $\overline{w} = \vec{\beta} \vec{w}^T$. Due to this complexity, the adaption of the pressure law in numerical simulations for traffic networks has not been considered so far. In this work, we suggest a practical approach for the simulation of traffic networks, which accounts for the changes in the pressure law on outgoing roads. We present an approximation for computing the adapted pressure, whenever the homogenized Lagrangian marker at the junctions changes. To achieve this, we replace (H4) by the following condition on the pressure:
	\\
	\\
	\textbf{(H4*)}
	Assume that the pressure on an outgoing road is initially given by $p(\rho)$, the Lagrangian markers on the roads entering the junction are given by $\vec{w}$ and the priority vector is $\vec{\beta}$. The homogenized Lagrangian marker is given by~\eqref{eq:barw_j} and we approximate the homogenized pressure~\eqref{eq:adaptedpress2} as follows:
	\begin{align} \label{eq:pressureapproximation}
	p^{*}(\rho) \approx p^{**}(\rho) = c(\vec{\beta}, \vec{w})  p(\rho).
	\end{align}
	
	The value of $c(\vec{\beta}, \vec{w})$ will be defined below. Its value will be transported with the outgoing velocity. Approximation properties and further details will be discussed in the following sections. 
	
	\subsection{Properties of the homogenized system} \label{sec:homogenizedsystem}
	
	Assuming the pressure law is of the type $c(\vec{\beta}, \vec{w}) p(\rho)$ with a value $c(\vec{\beta}, \vec{w})$ independent of $\rho,$  the pressure propagates with the velocity $v$ of the cars, according to equation \eqref{eq:ARZ_network}.  However, the value of $c$ might change over time   due to the coupling at the junction. This will change the value of $c$ corresponding to the value of the homogenized Lagrangian marker $\overline{w}.$ Once this marker enters the outgoing road it is transported with the flow. 
	The idea is to propose an additional advection equation to the original system \eqref{eq:ARZ} to propagate the value of $c(\vec{\beta}, \vec{w})$ together with the marker $\overline{w}.$ 
	The value of $c(\vec{\beta}, \vec{w})$ is then used as correction to the pressure $p(\rho).$ We will show below that this correction $ c(\vec{\beta},\vec{w}) p (\rho)$ is in fact an approximation to 
	the homogenized pressure in the sense of equation \eqref{eq:pressureapproximation}. 
	\par 
	If $c$ is transported similarly to $w$, then in conservative form, the system reads:  
	\begin{align}
	\partial_t \colvector{\rho \\ \rho w \\ \rho c} + \partial_x \colvector{ \rho v \\ \rho w v \\ \rho c v} = 0. \label{eq:ADPAR_Eulerian}
	\end{align}
	This system (with a different motivation) has been studied in ~\cite{BagRas2003}. In contrast with~\cite{BagRas2003}, here we do not work with the Lagrangian formulation, but instead consider the Eulerian formulation. The meaning of $c$ is also different compared with \cite{BagRas2003}. Nevertheless the mathematical properties are the same and will be recalled here for convenience.  The system~\eqref{eq:ADPAR_Eulerian} is hyperbolic with eigenvalues 
	$\lambda_1 = v- p'(\rho) \rho < v = \lambda_2 = \lambda_3.$

	The first field is genuinely nonlinear and the second and third field are linearly degenerate. The Riemann invariants associated with their corresponding eigenvalue are $(w,c)$, $(v,c)$ and $(v, \rho)$, respectively. We study the solution to the Riemann problem, i.e. the initial value problem with constant initial data $(\rho_0,w_0, c_0)$, for the system~\eqref{eq:ADPAR_Eulerian}. The solution, for some given constant initial data, consists of a shock or rarefaction wave associated with the first eigenvalue, followed by a contact discontinuity associated with the second and third eigenvalue (2-3-contact discontinuity). The following proposition summarizes the Riemann problem in Eulerian coordinates (see~\cite{BagRas2003} for the Lagrangian formulation):
	\begin{proposition} \label{prop:RiemannsolutionADPAR}
		Consider the Riemann problem
		
		\begin{align} \label{eq:ADPAR_RP}
		\begin{cases}
		\partial_t \colvector{\rho \\ \rho w \\ \rho c} + \partial_x \colvector{ \rho v \\ \rho w v \\ \rho c v} = 0 \\
		(\rho, w, c)(x,0) = \begin{pmatrix} (\rho^+,w^+, c^+) & \mbox{ for } x>0 \\ (\rho^-, w^-, c^-) & \mbox{ for } x \leq 0. \end{pmatrix}.
		\end{cases}
		\end{align}
		The solution $U(x,t) = (\rho, w, c)(x,t)$, to the system \eqref{eq:ADPAR_RP}, is as follows:
		\begin{itemize}
			\item [(i)] We connect $U_-$ to an intermediate state $\tilde{U} = (\tilde{\rho},\tilde{w}, \tilde{c})$ such that $\tilde{w} = w_-, \tilde{c} = c_-, \tilde{v} = v_+$ holds. The wave connecting $U_-$ and $\tilde{U}$ is either a 1-shock if $v_+ < v_-$ and a 1-rarefaction if $v_+ > v_-$. The states $\tilde{U}$ and $U_+$ are then connected by a 2-3 contact discontinuity of velocity $v_+$. 
			\item[(ii)] Additionally, $w$ and $c$ take only the values $(w_-,c_-)$ and $(w_+,c_+)$. The velocity $v$ is a monotone function of $x/t$ with $\min \lbrace v_-,v_+ \rbrace \leq v(x,t) \leq \max \lbrace v_-,v_+ \rbrace$ and for  $x> t v_+ $, we always have $U(x,t) = U_+$. 
			\item[(iii)] $U(x,t)$ and $v(x,t)$ remain in an invariant region $\mathcal{R}$ away from the vacuum
		\end{itemize}
		\begin{align*}
		\mathcal{R} := \lbrace (\rho,w,c) \lvert (v,w,c) \in [v_{\min}, v_{\max}]  \times  [w_{\min}, w_{\max}] \times  [c_{\min}, c_{\max}]\rbrace
		\end{align*}
	\end{proposition}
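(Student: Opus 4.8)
The plan is to treat the system \eqref{eq:ADPAR_Eulerian} as a $3\times 3$ hyperbolic system and exploit the structure already identified in the excerpt: eigenvalue $\lambda_1 = v - p'(\rho)\rho$ attached to a genuinely nonlinear field, and the double eigenvalue $\lambda_2 = \lambda_3 = v$ attached to linearly degenerate fields with Riemann invariants $(v,c)$ and $(v,\rho)$ respectively, together with $(w,c)$ for the first field. First I would reduce the problem to the classical Aw--Rascle structure by noting that $c$ is a passive scalar: it is constant along the particle paths $dx/dt = v$ and is a Riemann invariant of both the first \emph{and} the contact fields (for the first field because $(w,c)$ is invariant, for the contact because $(v,c)$ is invariant). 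Hence $c$ can only jump across the contact discontinuity, exactly where $w$ jumps. This immediately gives the ``$c$ takes only the values $c_\pm$'' part of (ii) once the analogous statement for $w$ is established, and it reduces everything to the known $2\times 2$ Aw--Rascle--Zhang Riemann solution in the $(\rho,w)$ variables, for which $w$ is constant across the $1$-wave and jumps only across the contact.

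Next I would construct the solution explicitly following (i): connect $U_-$ to $\tilde U$ by a $1$-wave. Along the $1$-wave $w$ is a Riemann invariant, so $\tilde w = w_-$; likewise $\tilde c = c_-$. The intermediate velocity $\tilde v$ is then pinned down by the requirement that $\tilde U$ and $U_+$ be joined by a contact discontinuity, across which $v$ is a Riemann invariant, forcing $\tilde v = v_+$. One checks that the $1$-wave is admissible: since $\lambda_1$ is genuinely nonlinear and — using $w$ constant — $v$ is a monotone function of $\rho$ along the $1$-wave curve, the wave is a rarefaction when $v_+ > v_-$ and an entropy shock (Lax condition) when $v_+ < v_-$; this is the standard ARZ computation and I would cite \cite{AwRas2000} and the Lagrangian version in \cite{BagRas2003} rather than redo it. For (ii), monotonicity of $v$ in $x/t$ follows because across a $1$-rarefaction $v$ interpolates monotonically between $v_-$ and $\tilde v = v_+$, across a $1$-shock $v$ is piecewise constant taking the two values $v_-,v_+$, and across the contact $v$ is constant equal to $v_+$; hence $v(x,t)\in[\min\{v_-,v_+\},\max\{v_-,v_+\}]$ and for $x > tv_+$ the point lies to the right of the contact, so $U(x,t)=U_+$.

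For (iii), the invariant region claim, I would verify that $\mathcal R$ is a product of intervals in the variables $(v,w,c)$ and argue that each of these variables stays within the convex hull of its two initial values. We already have $v\in[v_{\min},v_{\max}]$ from the monotonicity in (ii); and $w$, being constant across the $1$-wave and taking only $w_\pm$ across the contact, lies in $[w_{\min},w_{\max}]$ — likewise $c\in[c_{\min},c_{\max}]$. Since $\rho$ is recovered from $(v,w)$ via $\rho = p^{-1}(w-v)$ with $p$ monotone, staying away from vacuum amounts to $w - v$ staying bounded below by a positive constant, which holds on the compact region $\mathcal R$ provided the initial data are away from vacuum. I expect the only genuinely delicate point to be the admissibility/monotonicity bookkeeping for the $1$-wave when it is a shock — one must confirm that the Rankine--Hugoniot shock connecting $U_-$ to $\tilde U$ with $\tilde w = w_-$ indeed has the correct sign of velocity jump and satisfies the Lax entropy inequalities — but this is precisely the computation carried out in \cite{AwRas2000} (and, in Lagrangian form, \cite{BagRas2003}), so I would invoke it rather than reproduce it. Everything else is a direct transcription of the $2\times 2$ ARZ Riemann solution augmented by the passively advected scalar $c$.
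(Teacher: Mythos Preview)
Your proposal is correct and, in fact, more detailed than the paper's own treatment: the paper does not prove this proposition at all but simply states it, noting that the system ``has been studied in~\cite{BagRas2003}'' and that the Eulerian properties follow from the Lagrangian analysis there. Your reduction to the $2\times 2$ ARZ Riemann problem by treating $c$ as a passively advected scalar, together with the citations of \cite{AwRas2000} and \cite{BagRas2003} for the genuinely nonlinear $1$-wave bookkeeping, is exactly the right way to fill in what the paper leaves implicit.
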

	Subsequently, we will refer to this  solution, with our pressure approximation, as the "Adapted Pressure ARZ model" (AP). 
	We now define the network solution. 
	%
	\begin{definition}[Network solution AP] \label{def:networksolution_ADPAR}
		Consider a junction $k$ with $n$ incoming and $m$ outgoing roads, with constant initial data $U_{i,0}, i \in (\delta_{k}^- \cup \delta_{k}^+)$. We say that the family $\lbrace U_i(x,t) \rbrace_{i \in (\delta_{k}^- \cup \delta_{k}^+)}$ is an admissible solution to the Riemann problem~\eqref{eq:ARZ_RP_network} with approximated homogenized pressure if and only if
		\begin{itemize}
			\item $\forall i \in  (\delta_{k}^- \cup \delta_{k}^+)$, $U_i(x,t)$ is a weak solution of the network problem~\eqref{eq:networkproblem1} where the pressure is $ p_i ~~ \forall i \in \delta_{k}^-$. On an outgoing road $j \in \delta_{k}^+$, the solution in the triangle $\lbrace (x,t) ~ \vert ~ 0 < x < t \, v_{j,0} \rbrace$ is the approximated homogenized solution with pressure $p_j^{**}$. For $\lbrace (x,t) ~ \vert ~ t \, v_{j,0} < x \rbrace$, the pressure is $p_{j,0}$;
			\item the constraint~\eqref{eq:pressureapproximation} for the pressure is satisfied (H4*), and the homogenized $w$-value is given by $\overline{w_j} := \sum_{i \in \delta_{k}^-} \frac{q_{ji}}{q_j} w_i(U_{i,0}) ~~\forall j \in \delta_{k}^+$;
			\item the sum of the incoming fluxes is $\underline{maximal}$ subject to (H1*)-(H2) and they satisfy the mixture rule (H3).
		\end{itemize}
	\end{definition}
	 
	We remark that in comparison to the ARZ model, we only replace the true homogenized pressure $p^*$ by the approximation $p^{**}$. 
	
	Note that due to (H4*), the pressure law is always of the form $c p(\rho)$. Thus, the demand and supply function in (H1*) can be simplified to the  compact form~\eqref{eq:ADPAR_Demand}-\eqref{eq:ADPAR_Supply} 
	\begin{align} \label{eq:ADPAR_Demand}
	D^{\ADPARZ}(\rho,w,c) = \begin{cases}
	( w - c p(\rho))\rho & \text{if } \rho \leq \sigma(w,c),\\
	( w - cp(\sigma(w,c)))\sigma(w,c) & \text{otherwise,} 
	\end{cases} \\
	S^{\ADPARZ}(\rho,w,c) = \begin{cases}
	( w - cp(\sigma(w,c)))\sigma(w,c)  & \text{if } \rho \leq \sigma(w,c),\\
	( w - cp(\rho))\rho & \text{otherwise.} 
	\end{cases} \label{eq:ADPAR_Supply}
	\end{align}

	\subsection{The case of the  n-1-junction} \label{sec:nto1junction}
	
	To exemplify, we  consider the case $i \in \delta^-_k, |\delta^-_k|=n$ and $j \in \delta^+_k, |\delta^+_k|=1.$ We describe the approximation of the pressure $p^{**}_j$. Assume that initially, the pressure is given by $p_{j,0}(\rho) = c_{j,0}  \rho^{\gamma}, j \in \delta_{k}^+$, which is the prototype pressure function used in the literature~\cite{AwRas2000,GarPic2006}. Assume that the mixture of incoming flows complies with the mixture rule $\vec\beta = (\beta_i)_{i=1, \dots, n}$ with $\sum_{i=1}^{n} \beta_i=1$. Together with the vector of incoming Lagrangian markers $\vec{w} = (w_i)_{i=1, \dots,n}$, the homogenized value $\overline{w}$~\eqref{eq:barw_j} is given by 
	\begin{equation}\label{eq:homogenizedwwithbeta}
	\overline{w_j} = \sum_{i=1}^{n} \beta_i w_i, ~~j \in \delta_{k}^+.
	\end{equation}
	We  approximate the homogenized pressure $p^*_j$   by $p^{**}_j$ as in  equation~\eqref{eq:pressureapproximation}. We define the approximated pressure therefore by 
	\begin{align*}
	p^{**}_j(\rho) = c(\vec{\beta}, \vec{w}) p_{j,0}(\rho) = c(\beta_1, \dots \beta_n, w_{1}, \dots w_{n}) c_{j,0}\rho^\gamma := \overline{c} \rho^\gamma,
	\end{align*}
	where $\overline{c}$ is given by equation \ref{cbar}.
	
	\begin{lemma} \label{lem:nto1}
		Consider a junction $k$ with $n$ incoming roads and a single outgoing road, with constant initial data $U_i = (\rho_{i,0}, w_{i,0}),~i \in (\delta_{k}^- \cup \delta_{k}^+)$, initial pressure  $p(\rho) = c_{j,0} \, \rho^\gamma, \gamma \geq 1$ on the outgoing road $j=n+1$ and the rules (H1*),(H2),(H3),(H4*). The approximation of the homogenized pressure~\eqref{eq:pressureapproximation} on the outgoing road is given by 
		\begin{equation} \label{eq:approxpressurev=0}
		p^{**}_j(\rho) = c_{j,0} \left( \sum_{i=1}^{n} \beta_i w_i \left(\sum_{l=1}^{n} \frac{\beta_l}{w_l^{1/ \gamma}}\right)^\gamma \right) \rho^\gamma.
		\end{equation}	
		Then there exists a unique network solution in the sense of Definition~\ref{def:networksolution_ADPAR}. 
	\end{lemma}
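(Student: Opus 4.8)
The plan is to prove the two assertions separately. First I would \emph{derive} the closed form of the approximated pressure from the homogenization relation \eqref{eq:adaptedpress1}, and then \emph{construct} the unique network solution by collapsing the vector flux-maximization to a scalar one and running the standard demand–supply/trace argument of \cite{HerRas2006}.

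\textbf{Step 1: the pressure coefficient.} For the prototype pressure $p_{j,0}(\rho)=c_{j,0}\,\rho^{\gamma}$ we have $P_{j,0}(\tau)=c_{j,0}\,\tau^{-\gamma}$, hence $P_{j,0}^{-1}(y)=(c_{j,0}/y)^{1/\gamma}$. Substituting this together with $q_{ji}/q_j=\beta_i$ into \eqref{eq:adaptedpress1} gives, for every admissible velocity $v$,
\[
\tau \;=\; c_{j,0}^{1/\gamma}\sum_{i=1}^{n}\frac{\beta_i}{(w_i-v)^{1/\gamma}} \;=\; (P_j^{*})^{-1}(\overline{w_j}-v).
\]
Imposing the power-law ansatz $p_j^{*}(\rho)\approx p_j^{**}(\rho)=\overline c\,\rho^{\gamma}$ turns the right-hand side into $\big(\overline c/(\overline{w_j}-v)\big)^{1/\gamma}$. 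Since a single power law cannot reproduce the left-hand side for all $v$ unless all $w_i$ agree, I would fix $\overline c$ by matching at the stopped-traffic reference state $v=0$; this is the natural normalization, because the \emph{exact} homogenized pressure already satisfies $(P_j^{*})^{-1}(\overline{w_j})=\sum_i\beta_i P_{j,0}^{-1}(w_i)$ at $v=0$, i.e.\ the homogenized jam specific volume is precisely the $\vec\beta$-weighted average of the incoming ones. Solving $c_{j,0}^{1/\gamma}\sum_i\beta_i w_i^{-1/\gamma}=(\overline c/\overline{w_j})^{1/\gamma}$ for $\overline c$ and inserting $\overline{w_j}=\sum_i\beta_i w_i$ from \eqref{eq:homogenizedwwithbeta} yields $\overline c=c_{j,0}\big(\sum_i\beta_i w_i\big)\big(\sum_l\beta_l w_l^{-1/\gamma}\big)^{\gamma}$, which is exactly \eqref{eq:approxpressurev=0}. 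Because $\overline c>0$ and $\gamma\ge1$, the map $\rho\mapsto\rho\big(\overline{w_j}-p_j^{**}(\rho)\big)$ is strictly concave with a unique maximiser $\sigma_j(\overline{w_j})$, so $p_j^{**}$ is an admissible pressure and the demand/supply \eqref{eq:ADPAR_Demand}–\eqref{eq:ADPAR_Supply} are well defined on the outgoing road.

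\textbf{Step 2: the network solution.} Since there is a single outgoing road, (H2) forces $\alpha_{ji}=1$ and $q_{ji}=q_i$, while (H3) gives $\vec q=z\,\vec\beta$; therefore $q_j=\sum_i q_i=z$, $q_i=\beta_i q_j$, and $q_{ji}/q_j=\beta_i$, which is consistent with \eqref{eq:barw_j}–\eqref{eq:homogenizedwwithbeta}. All fluxes are thus parametrized by the single scalar $q_j$, and (H1*) reduces to $\beta_i q_j\le d_i(p_i;\rho_{i,0},w_{i,0})$ for all $i$ together with $q_j\le s_j(p_j^{**};\tilde\rho_j,\overline{w_j})$. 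Maximizing $\sum_i q_i=q_j$ over this interval gives the unique value
\[
q_j^{*}=\min\!\Big\{\, s_j\big(p_j^{**};\tilde\rho_j,\overline{w_j}\big),\ \min_{i:\,\beta_i>0}\tfrac{1}{\beta_i}\,d_i\big(p_i;\rho_{i,0},w_{i,0}\big)\,\Big\},\qquad q_i^{*}=\beta_i\,q_j^{*}.
\]
I would then recover the wave fans by the standard half-Riemann construction: on each incoming road the genuinely nonlinear $1$-wave preserves $w_i=w_{i,0}$, and the prescribed flux $q_i^{*}\in[0,d_i]$ selects a unique trace reachable with non-positive wave speeds; on the outgoing road the $2$-$3$ contact of speed $v_{j,0}$ from Proposition~\ref{prop:RiemannsolutionADPAR} transports the marker $\overline{w_j}$ and the coefficient $\overline c$ into the triangle $\{0<x<t\,v_{j,0}\}$, where the pressure is $p_j^{**}$ and $q_j^{*}\in[0,s_j]$ selects a unique free-flow trace, while for $x>t\,v_{j,0}$ the right datum with pressure $p_{j,0}$ is left unchanged. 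The resulting family satisfies every item of Definition~\ref{def:networksolution_ADPAR}: conservation of mass \eqref{eq:RankineHugoniot_mass} holds by $\sum_i q_i^{*}=q_j^{*}$, and conservation of the generalized momentum \eqref{eq:conservation_pseudomomentum} by $\sum_i q_i^{*}w_i=q_j^{*}\sum_i\beta_i w_i=q_j^{*}\overline{w_j}$; and since each selection above is single-valued, the solution is unique. Structurally this is the proof of \cite{HerRas2006} (Section~7, Theorem~7.1), the only change being that the true homogenized pressure $p_j^{*}$ is replaced by the admissible power law $p_j^{**}$.

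I expect the main obstacle to lie in the outgoing-road part of Step 2: one has to check that, with the \emph{approximated} pressure in force, $s_j(p_j^{**};\tilde\rho_j,\overline{w_j})$ is still precisely the largest flux attainable by a non-negative-speed wave fan carrying $(\overline{w_j},\overline c)$ — equivalently, that the density $\tilde\rho_j$ (whose definition is only given later, in Section~\ref{sec:numscheme}) coincides with the density of the intermediate state $\tilde U$ of Proposition~\ref{prop:RiemannsolutionADPAR}, namely the solution of $v_{j,0}+\overline c\,\tilde\rho_j^{\gamma}=\overline{w_j}$ — and that the $(\rho,\rho w)$-component of the constructed solution is a genuine weak solution of \eqref{eq:networkproblem1} across the junction despite the pressure jump at the contact.
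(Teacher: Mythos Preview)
Your proposal is correct and follows essentially the same route as the paper: derive $\overline c$ by inserting $P_{j,0}^{-1}$ into the homogenization relation \eqref{eq:adaptedpress1} and freezing $v=0$, then reduce the flux maximization to a single scalar via (H2)--(H3), and finally build the incoming and outgoing traces by the half-Riemann/demand--supply argument together with Proposition~\ref{prop:RiemannsolutionADPAR}. The paper's proof is terser --- it simply \emph{proposes} formula \eqref{cbar} inside the proof and explains the $v=0$ matching only in the discussion afterwards, and it does not spell out the explicit $\min$ for $q_j^{*}$ or the momentum check $\sum_i q_i^{*}w_i=q_j^{*}\overline{w_j}$ --- but the logical skeleton is identical.
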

	
	\begin{proof}
		For $\gamma \geq 1$, we have $P_j(\tau) = p_j(1/\tau) = p_j (\rho) =c_{j,0} \rho^\gamma$ and $\tau$ following~\eqref{eq:adaptedpress1} is defined by the relationship
		\begin{align*}
		\tau = \int P_j^{-1}(w_{i} -v) \, \mathrm{d} \mu_x=  \sum_{i=1}^{n} \beta_i \left( \frac{c_{j,0} }{w_{i} -v} \right)^{\frac{1}{\gamma }},
		\end{align*}
		where $d\mu_x$ describes the mixture with priorities $\beta_i, i=1, \dots, n$. Due to~\eqref{eq:adaptedpress3},  the homogenized Lagrangian marker $\overline{w}$ is the sum of the velocity $v$ and the  homogenized
		pressure $p^*_j,$ i.e., $\overline{w_j} = v + p^*(\rho).$
		According to assumption (H4*) we have 
		\begin{equation} \label{eq:pressurerelation}
		\overline{w_j} = v+ p^{**}(\rho) \overset{\eqref{eq:pressureapproximation} }{=} v + c(\vec{\beta}, \vec{w}) c_{j,0} \left( \frac{1}{\tau} \right)^\gamma =: v + \overline{c} \left( \frac{1}{\tau} \right)^\gamma,
		\end{equation}
		with a function $\overline{c}$ depending only on $\vec{\beta}$ and $\vec{w}.$ Those are quantities known due to the initial data. Comparing now the value of $\overline{w_j}$ and $\overline{c}$ 
		we {\bf propose} the following choice for $\overline{c}:$
		\begin{equation}
		\overline{c} =  c_{j,0} \left(\sum_{i=1}^{n} \beta_i w_i \left(\sum_{l=1}^{n} \frac{\beta_l }{w_l^{1 / \gamma }}\right)^\gamma \right). \label{cbar}
		\end{equation}
		Once the pressure $p^{**}$ for the outgoing road with the priority rule $\vec{\beta}$ given by (H3) is determined, we can determine the flux through the junction, which is maximal subject to (H1*), with the demand and supply functions~\eqref{eq:ADPAR_Demand}-\eqref{eq:ADPAR_Supply}. Since there there is a unique outgoing road, the fluxes leaving the incoming roads have to exit the junction at the outgoing road by rule (H2). On the incoming roads, the pressure law is unchanged and the solution $U_i(x,t)$ is the solution of a (half-)Riemann problem~\eqref{eq:ARZ_RP_network}. On the outgoing road, the pressure function  $p^{**}_j(\rho)$ possesses the same properties as the prototype pressure function $p_{j,0}(\rho)$. In particular, the flux function $\rho (\overline{w_j} - p_j^{**}(\rho))$ is concave and has a unique maximum $\sigma(\overline{w_j}, \overline{c})$. The state $U_j^-$ corresponding to the flux value exiting the junction is therefore uniquely determined. Away from the junction, the solution is the unique entropy solution of the Riemann problem~\eqref{eq:ADPAR_RP} given in Proposition~\ref{prop:RiemannsolutionADPAR} with $U_j^-$ on the left and $U_j^+ = U_{j,0}$ on the right. 
	\end{proof}
	
	The crucial point in the previous proof is the definition of $\overline{c}$ given by equation \eqref{cbar}.  This choice is motivated by the following
	consideration. According to the result in \cite{HerRas2006} the homogenized pressure $p^*_j$ for a fixed value $\overline{w_j}$ the relation 
	between $v,w$ and $\rho$ on road $j$ can be expressed as 
	\begin{equation} \label{eq:pressurerelation1}
	w = v+ p^{*}_j(\rho). 
	\end{equation}
	We may use the general  Ansatz $p^*(\rho)= C(\rho,v) \rho^\gamma$ to define $p^*_j.$ Hence, the choice \eqref{cbar} implicitly assumes that $C$ is {\em independent } of $\rho.$ 
	Then  we may proceed as in the previous and obtain the following functional dependence $C=C(v)$  
	\begin{align}\label{C22}
	\overline{C}(v) &= (\overline{w_j} -v) \left( \sum_{i=1}^{n} \beta_i \left( \frac{c_{j,0} }{w_{i} -v} \right)^{\frac{1}{\gamma }} \right)^\gamma . 
	\end{align}
	Comparing now \eqref{C22} and \eqref{cbar} we observe that the proposed choice is simply  $\overline{c}=C(0),$ i.e., setting $v=0$ in equation \eqref{C22}.  
	Obviously, other choices are possible. However, for the presented approach it is crucial that the final approximation  $p^{**}_j$ contains a value $\overline{c}$
	independent of the dynamic quantities$(\rho,v,w).$ Only  this fact allows to propagate the value $\overline{c}$ with velocity $v$ and only this fact
	justifies the additional equation in Proposition~\ref{prop:RiemannsolutionADPAR}.

	
	\textit{Example: The 2-1-junction, $\gamma =1$:}
	We state the explicit formula for $p^{**}$ for a junction  merging of two incoming roads $i=1,2$ into a single outgoing road $j=3$
	Assume that fixed ratios $\beta_1 = \beta$ and $ \beta_2 = 1-\beta$ are assigned to the incoming roads such that the homogenized the Lagrangian marker is given by~\eqref{eq:homogenizedwwithbeta} and the pressure according to Lemma 1 by~\eqref{eq:approxpressure_21}.
	\begin{align}
	\overline{w_3} &= \beta_1 w_{1,0} + \beta_2 w_{2,0},  \label{eq:ADPAR_w3} \\
	p^{**}_3(\rho) &= c_{3,0} \left( \sum_{i=1}^{2} \sum_{l=1}^{2}\frac{\beta_i \beta_l w_{l,0}}{w_{i,0}} \right) \rho = c_{3,0} \left(1 + \beta (1- \beta) \frac{ \left( w_{1,0} - w_{2,0} \right)^2}{w_{1,0}w_{2,0}} \right) \rho. \label{eq:approxpressure_21}
	\end{align}
	
	For the choice $\beta=0$ or $\beta=1$  the   homogenized pressure $p^*_3$ can be computed exactly and we compare it with the  approximation $p^{**}_3$.  In this case $p^*_3\equiv p^{**}_3.$  Furthermore, for  $w_{1,0} = w_{2,0}$ and any value of $\beta$ we have  $p^{**}_3(\rho) = p^{*}_3(\rho) = p_{3,0}(\rho)$.  Figure~\ref{img:pressurecomparison_all} illustrates {\bf numerically }the differences in pressure approximations for $\beta =0.5, w_{1,0} = 9/2, w_{2,0} = 7/2$ and $\overline{w_3}= 4$. Initially, the pressure is set to $p_{3,0}(\rho) = \rho$. The  homogenized  pressure, $p^*_3(\rho)$ is numerically computed by the homogenization formula~\eqref{eq:adaptedpress2} and the pressure $p^{**}_3(\rho)$ is obtained from~\eqref{eq:approxpressure_21}. 
	
	\begin{figure}[tbhp]
		
		\centering
		\begin{subfigure}{0.45\textwidth}
%
%
\begin{tikzpicture}
\setlength\fwidth{0.85\textwidth}
\begin{axis}[%
width=0.951\fwidth,
height=0.75\fwidth,
at={(0\fwidth,0\fwidth)},
scale only axis,
xmin=0,
xmax=4,
xlabel style={font=\color{white!15!black}},
xlabel={$\rho$},
ymin=0,
ymax=0.6,
ylabel style={font=\color{white!15!black}},
ylabel={pressure difference},
axis background/.style={fill=white},
axis x line*=bottom,
axis y line*=left,
legend style={legend cell align=left, align=left, draw=white!15!black}
]
\addplot [color=black, dashdotted]
  table[row sep=crcr]{%
0	0.5\\
0.032	0.484255934497534\\
0.0640000000000001	0.469022953566001\\
0.0960000000000001	0.454298715905187\\
0.128	0.440079358831523\\
0.16	0.426359556046886\\
0.192	0.413132595695855\\
0.224	0.400390476101966\\
0.257	0.387748244549075\\
0.29	0.375600614675012\\
0.323	0.363935295731072\\
0.357	0.352407006923057\\
0.391	0.341361481203485\\
0.426	0.33047861043467\\
0.461	0.320072656422384\\
0.497	0.309847785882599\\
0.534	0.299823605718747\\
0.571	0.290269268023225\\
0.609	0.280923137568032\\
0.648	0.27179862369764\\
0.688	0.262906912137274\\
0.729	0.254257020162195\\
0.772	0.24566130164828\\
0.816	0.237340220348926\\
0.861	0.229295612292171\\
0.908	0.221363605770994\\
0.956	0.213725379034194\\
1.006	0.206231273986138\\
1.058	0.19890177908836\\
1.112	0.191753970233526\\
1.168	0.184801664930559\\
1.227	0.177943143883375\\
1.288	0.171313436661017\\
1.352	0.164818648698993\\
1.419	0.158480558538035\\
1.489	0.152316731556677\\
1.563	0.146261957616285\\
1.641	0.140343509630991\\
1.723	0.13458345533896\\
1.81	0.128936651831242\\
1.902	0.123430546847957\\
1.999	0.118086797524022\\
2.102	0.112873274888637\\
2.212	0.107769335582343\\
2.329	0.102804324146335\\
2.454	0.0979637730896643\\
2.588	0.0932404261698832\\
2.731	0.0886630754492428\\
2.885	0.084197825373443\\
3.05	0.0798753845704034\\
3.228	0.0756733412112531\\
3.421	0.0715803152495686\\
3.63	0.0676112184941431\\
3.857	0.0637630975852561\\
4.001	0.0615378876247643\\
};
\addlegendentry{$\abs{p^*(\rho) - p_0(\rho)}$}

\addplot [color=black]
  table[row sep=crcr]{%
0	0.5\\
0.032	0.483747997989597\\
0.0640000000000001	0.468007080550128\\
0.0960000000000001	0.452774906381378\\
0.128	0.438047612799778\\
0.16	0.423819873507203\\
0.192	0.410084976648236\\
0.224	0.396834920546411\\
0.257	0.38366887946971\\
0.29	0.370997440071837\\
0.323	0.358808311604088\\
0.357	0.346740340256391\\
0.391	0.335155131997136\\
0.426	0.323716705672766\\
0.461	0.312755196104923\\
0.497	0.301958896993709\\
0.534	0.291347415242556\\
0.572	0.280937995770116\\
0.61	0.271000821870441\\
0.649	0.261269088038176\\
0.689	0.251753948180847\\
0.73	0.242464392538478\\
0.773	0.233197127803702\\
0.817	0.224188194926978\\
0.862	0.215439418746355\\
0.909	0.206771250977799\\
0.957	0.198380510052989\\
1.007	0.190101843846752\\
1.059	0.181955709467474\\
1.113	0.173959158311994\\
1.169	0.166125989984898\\
1.228	0.158338726512974\\
1.289	0.150748116969535\\
1.353	0.143244499949337\\
1.42	0.135849631074897\\
1.491	0.128480396493461\\
1.565	0.121263193856811\\
1.643	0.114118222522892\\
1.725	0.107067516698569\\
1.812	0.100050155469323\\
1.904	0.093093545354912\\
2.001	0.0862193199352292\\
2.104	0.0793795479899293\\
2.214	0.072537759315642\\
2.331	0.0657232650444479\\
2.456	0.0589057628634437\\
2.59	0.0520621489684805\\
2.733	0.0452211917999987\\
2.887	0.0383173133656998\\
3.053	0.0313404794018393\\
3.232	0.0242822858776295\\
3.425	0.0171349907021847\\
3.635	0.00982313890252318\\
3.863	0.00234976325053005\\
3.94	7.81207875295564e-05\\
4.001	0.0019700488831722\\
};
\addlegendentry{$\abs{p^*(\rho) - p^{**}(\rho)}$}

\end{axis}
\end{tikzpicture}%
			\subcaption{Approximation error}
			\label{img:pressurecomparison_pressuredifference}
		\end{subfigure}
		\begin{subfigure}{0.45\textwidth}
%
%
\begin{tikzpicture}
\setlength\fwidth{0.85\textwidth}
\begin{axis}[%
width=0.951\fwidth,
height=0.75\fwidth,
at={(0\fwidth,0\fwidth)},
scale only axis,
xmin=0,
xmax=4,
xlabel style={font=\color{white!15!black}},
xlabel={$\rho$},
ymin=0,
ymax=4,
ylabel style={font=\color{white!15!black}},
ylabel={flux},
axis background/.style={fill=white},
axis x line*=bottom,
axis y line*=left,
legend style={at={(0.75,0.45)},legend cell align=left, align=left, draw=white!15!black}
]
\addplot [color=black, dashdotted]
  table[row sep=crcr]{%
0	0\\
0.069	0.271239\\
0.137	0.529231\\
0.203	0.770791\\
0.268	1.000176\\
0.332	1.217776\\
0.395	1.423975\\
0.457	1.619151\\
0.518	1.803676\\
0.578	1.977916\\
0.637	2.142231\\
0.695	2.296975\\
0.752	2.442496\\
0.807	2.576751\\
0.861	2.702679\\
0.914	2.820604\\
0.966	2.930844\\
1.017	3.033711\\
1.067	3.129511\\
1.116	3.218544\\
1.164	3.301104\\
1.211	3.377479\\
1.257	3.447951\\
1.302	3.512796\\
1.346	3.572284\\
1.389	3.626679\\
1.431	3.676239\\
1.473	3.722271\\
1.514	3.763804\\
1.554	3.801084\\
1.593	3.834351\\
1.631	3.863839\\
1.669	3.890439\\
1.706	3.913564\\
1.742	3.933436\\
1.778	3.950716\\
1.814	3.965404\\
1.849	3.977199\\
1.884	3.986544\\
1.919	3.993439\\
1.954	3.997884\\
1.988	3.999856\\
2.022	3.999516\\
2.056	3.996864\\
2.09	3.9919\\
2.125	3.984375\\
2.16	3.9744\\
2.195	3.961975\\
2.231	3.946639\\
2.267	3.928711\\
2.304	3.907584\\
2.341	3.883719\\
2.378	3.857116\\
2.416	3.826944\\
2.455	3.792975\\
2.494	3.755964\\
2.534	3.714844\\
2.575	3.669375\\
2.617	3.619311\\
2.66	3.5644\\
2.704	3.504384\\
2.748	3.440496\\
2.793	3.371151\\
2.839	3.296079\\
2.886	3.215004\\
2.934	3.127644\\
2.983	3.033711\\
3.033	2.932911\\
3.084	2.824944\\
3.136	2.709504\\
3.189	2.586279\\
3.243	2.454951\\
3.298	2.315196\\
3.354	2.166684\\
3.411	2.009079\\
3.47	1.8391\\
3.53	1.6591\\
3.591	1.468719\\
3.653	1.267591\\
3.716	1.055344\\
3.78	0.831599999999999\\
3.845	0.595975000000001\\
3.911	0.348079\\
3.978	0.0875160000000008\\
4.001	-0.00400100000000148\\
};
\addlegendentry{for $p_0(\rho)$}

\addplot [color=black]
  table[row sep=crcr]{%
0	0\\
0.0680000000000001	0.267302603174603\\
0.135	0.521485714285714\\
0.201	0.762957714285714\\
0.266	0.992120888888889\\
0.33	1.20937142857143\\
0.393	1.41509942857143\\
0.454	1.60661231746032\\
0.514	1.78761041269841\\
0.573	1.95845942857143\\
0.631	2.11951898412698\\
0.688	2.2711426031746\\
0.744	2.41367771428571\\
0.799	2.54746565079365\\
0.853	2.67284165079365\\
0.906	2.79013485714286\\
0.958	2.89966831746032\\
1.009	3.00175898412698\\
1.059	3.09671771428571\\
1.108	3.18484926984127\\
1.156	3.26645231746032\\
1.203	3.34181942857143\\
1.249	3.41123707936508\\
1.294	3.47498565079365\\
1.338	3.53333942857143\\
1.381	3.5865666031746\\
1.423	3.63492926984127\\
1.464	3.67868342857143\\
1.504	3.71807898412698\\
1.543	3.75335974603175\\
1.582	3.78555022222222\\
1.62	3.81394285714286\\
1.657	3.83876926984127\\
1.693	3.86025498412698\\
1.729	3.87910755555556\\
1.765	3.89532698412698\\
1.8	3.90857142857143\\
1.835	3.91932698412698\\
1.87	3.92759365079365\\
1.904	3.93324088888889\\
1.938	3.93653942857143\\
1.972	3.93748926984127\\
2.006	3.93609041269841\\
2.04	3.93234285714286\\
2.074	3.9262466031746\\
2.108	3.91780165079365\\
2.143	3.90665498412698\\
2.178	3.89301942857143\\
2.213	3.87689498412698\\
2.249	3.85771326984127\\
2.285	3.83589841269841\\
2.322	3.81073371428571\\
2.359	3.78278755555556\\
2.397	3.75119085714286\\
2.436	3.715712\\
2.475	3.67714285714286\\
2.515	3.6343746031746\\
2.556	3.58716342857143\\
2.598	3.53525942857143\\
2.641	3.4784066031746\\
2.685	3.41634285714286\\
2.73	3.3488\\
2.776	3.27550374603175\\
2.823	3.19617371428571\\
2.87	3.11235555555556\\
2.918	3.02212165079365\\
2.967	2.92517942857143\\
3.017	2.82123022222222\\
3.068	2.70996926984127\\
3.12	2.59108571428571\\
3.174	2.46181485714286\\
3.229	2.32405993650794\\
3.285	2.17748571428571\\
3.342	2.02175085714286\\
3.4	1.85650793650794\\
3.459	1.68140342857143\\
3.519	1.49607771428571\\
3.58	1.30016507936508\\
3.642	1.09329371428572\\
3.705	0.875085714285715\\
3.769	0.64515707936508\\
3.834	0.403117714285715\\
3.9	0.14857142857143\\
3.938	-0.00200025396825154\\
};
\addlegendentry{for $p^{**}(\rho)$}

\addplot [color=black, dashed, line width=2.0pt]
  table[row sep=crcr]{%
0	0\\
0.089	0.307363605378292\\
0.172	0.585945159890363\\
0.25	0.839902949199448\\
0.324	1.07322110140586\\
0.395	1.289638258993\\
0.463	1.48970617629691\\
0.529	1.67685049685985\\
0.593	1.85146329423953\\
0.655	2.0139879946922\\
0.715	2.16490598712801\\
0.773	2.30472503290615\\
0.83	2.43622560550259\\
0.885	2.55748429290407\\
0.939	2.67109894967708\\
0.992	2.77731823865745\\
1.044	2.87639690869825\\
1.094	2.96683351697706\\
1.143	3.05083857982651\\
1.191	3.12866968964668\\
1.238	3.20058388892025\\
1.284	3.26683677299815\\
1.329	3.32768174958596\\
1.373	3.38336942697487\\
1.416	3.43414710789359\\
1.458	3.48025836996886\\
1.499	3.52194271722474\\
1.539	3.5594352899021\\
1.579	3.59378710336052\\
1.618	3.62425194682562\\
1.656	3.65105620912788\\
1.693	3.67442121030014\\
1.73	3.695086056928\\
1.767	3.71304826401274\\
1.803	3.72792876316409\\
1.839	3.74024661177534\\
1.874	3.74976371586167\\
1.909	3.75685530658175\\
1.944	3.76151994849669\\
1.979	3.76375630581534\\
2.013	3.76360238039426\\
2.047	3.76115470910573\\
2.082	3.75623806674347\\
2.117	3.74888870177584\\
2.152	3.73910566793718\\
2.187	3.72688808081297\\
2.223	3.71178066399932\\
2.259	3.69409587259581\\
2.296	3.67323325023262\\
2.333	3.64964644760068\\
2.37	3.6233347184787\\
2.408	3.59347473720597\\
2.447	3.55983879943954\\
2.486	3.5231731533954\\
2.526	3.4824194329051\\
2.567	3.43733785810274\\
2.609	3.38768261771162\\
2.652	3.33320187065935\\
2.695	3.27503435227041\\
2.739	3.21169706750729\\
2.784	3.14292612299593\\
2.83	3.06845160197577\\
2.877	2.98799756586693\\
2.925	2.90128205582665\\
2.974	2.80801709428289\\
3.024	2.70790868643496\\
3.075	2.60065682171225\\
3.127	2.48595547518398\\
3.18	2.36349260891408\\
3.234	2.23295017325664\\
3.289	2.09400410808855\\
3.345	1.94632434397702\\
3.402	1.78957480328056\\
3.46	1.6234134011828\\
3.519	1.44749204665932\\
3.58	1.2582951161061\\
3.642	1.05837827603446\\
3.705	0.847369405070346\\
3.769	0.624890379767551\\
3.834	0.3905570752364\\
3.9	0.143979365713015\\
3.938	-0.00193871136279533\\
};
\addlegendentry{for $p^{*}(\rho)$}

\end{axis}
\end{tikzpicture}%
			\subcaption{Flux functions with different pressures}
			\label{img:pressurecomparison_flux}
		\end{subfigure}
		
		\caption{Homogenized pressure $p^*$ and its approximation $p^{**}$ as well as the  initial pressure $p_0$.}
		\label{img:pressurecomparison_all}
	\end{figure}
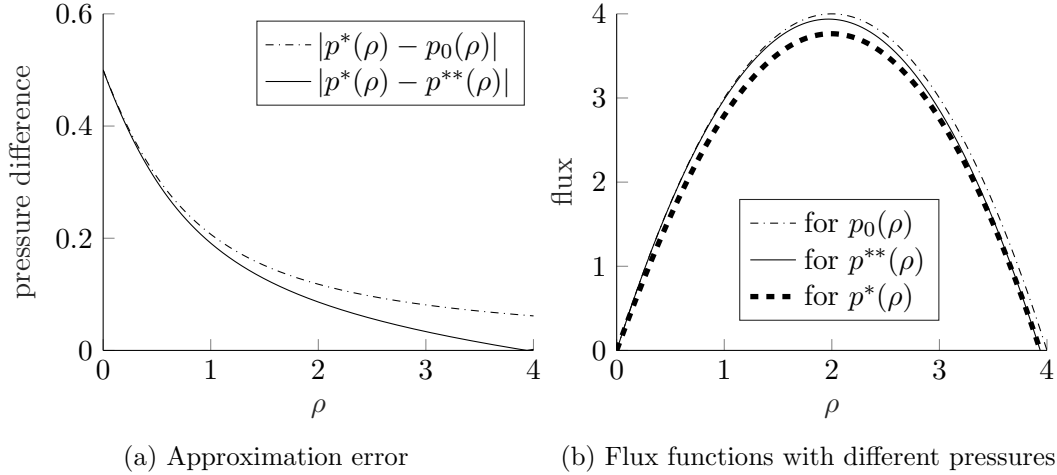

	\subsection{The case of a n-m-junction}
	
	Consider a junction with $i=1, \dots n$ incoming roads and $j=1, \dots m $ outgoing roads. Using distribution rates $\alpha_{ji}$ and priority coefficients $\beta_{ij}$ complying with the rules (H2)-(H3), we determine the solution on each outgoing road using a construction similar to the $1$-$n$ junction.
	
	\begin{lemma} \label{lem:pressure_ntom}
		Consider a junction with $n$ incoming roads and $m$ outgoing roads, with constant initial data $U_i = (\rho_{i,0}, w_{i,0}),~i \in (\delta_{k}^- \cup \delta_{k}^+)$, initial pressures  $p_j(\rho) = c_{j,0} \, \rho^\gamma, \gamma \geq 1$ on the outgoing roads  $j=n+1, \dots n+m$ and the rules (H1*), (H2), (H3), (H4*). Then, there exists a unique network solution according to Definition~\ref{def:networksolution_ADPAR} and the approximation of the homogenized pressure \eqref{eq:pressureapproximation} on each outgoing road, setting  $v=0$, is given by
		\begin{equation} \label{eq:pressure_ntom}
		p^{**}_j(\rho) = c_{j,0} \left( \sum_{i=1}^{n} \beta_{ij} w_i \left(\sum_{l=1}^{n} \frac{\beta_{lj}}{w_l^{1 / \gamma}}\right)^\gamma \right) \rho^\gamma.
		\end{equation}
	\end{lemma}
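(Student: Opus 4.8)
The plan is to reduce the $n$-$m$ junction to $m$ essentially independent instances of the $n$-$1$ construction of Lemma~\ref{lem:nto1}, one per outgoing road. The key structural point is that the distribution matrix $(\alpha_{ji})$ and the priority coefficients $(\beta_{ij})$ are prescribed data complying with (H2)--(H3), so the proportions $q_{ji}/q_j=\beta_{ij}$ that enter the homogenization rule (H4*) are known \emph{before} the fluxes are determined. Hence, for each $j\in\delta_k^+$ the homogenized marker $\overline{w_j}=\sum_i\beta_{ij}w_i$ of \eqref{eq:barw_j} and the corresponding adapted pressure are fixed at the outset, which decouples the pressure adaptation on the outgoing roads from the flux-maximization step.

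To obtain formula \eqref{eq:pressure_ntom} I would repeat, for each $j$ separately, the computation in the proof of Lemma~\ref{lem:nto1}, replacing the single priority vector $(\beta_i)_i$ by the column $(\beta_{ij})_i$. With $P_j(\tau)=c_{j,0}\tau^{-\gamma}$, relation \eqref{eq:adaptedpress1} becomes $\tau=\sum_i\beta_{ij}\bigl(c_{j,0}/(w_i-v)\bigr)^{1/\gamma}$; imposing (H4*) in the form $\overline{w_j}=v+\overline{c}_j\,(1/\tau)^\gamma$, together with the choice $\overline{c}_j=\overline{C}_j(0)$, i.e.\ setting $v=0$ exactly as in the passage from \eqref{C22} to \eqref{cbar}, yields $\overline{c}_j=c_{j,0}\bigl(\sum_i\beta_{ij}w_i\bigl(\sum_l\beta_{lj}/w_l^{1/\gamma}\bigr)^\gamma\bigr)$ and hence $p^{**}_j(\rho)=\overline{c}_j\rho^\gamma$, which is \eqref{eq:pressure_ntom}. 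As in the $n$-$1$ case, the assumption $\gamma\ge1$ ensures that $p^{**}_j$ inherits the qualitative properties of the prototype pressure, so the flux $\rho(\overline{w_j}-p^{**}_j(\rho))$ is concave with a unique maximizer $\sigma(\overline{w_j},\overline{c}_j)$, and the demand/supply pair \eqref{eq:ADPAR_Demand}--\eqref{eq:ADPAR_Supply} is well defined on each outgoing road.

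For existence and uniqueness of the network solution I would argue that, once the pressures $p^{**}_j$ are frozen, the determination of the fluxes is structurally identical to the LWR network problem: the Kirchhoff condition \eqref{eq:RankineHugoniot_mass}, the demand/supply bounds (H1*), the distribution rule (H2) and the mixture rule (H3) through \eqref{eq:mixturerule_junction} constitute a linear program whose unique flux-maximizing solution exists by the standard arguments for LWR networks~\cite{trafficflowonnetworks,HolRis1995}, equivalently by Theorem~7.1 in~\cite{HerRas2006}. From the unique total fluxes $q_i$ one recovers $q_{ji}=\alpha_{ji}q_i$ and the admissible junction states: on each incoming road the solution is the half-Riemann solution of \eqref{eq:ARZ_RP_network} with unchanged pressure $p_i$, and on each outgoing road it is, by Proposition~\ref{prop:RiemannsolutionADPAR}, the entropy solution of \eqref{eq:ADPAR_RP} with the junction state on the left and $U_{j,0}$ on the right, carrying the pressure $p^{**}_j$ in the triangle $\{0<x<t\,v_{j,0}\}$ and $p_{j,0}$ beyond it; the momentum balance \eqref{eq:conservation_pseudomomentum} then holds automatically since $w$ is a Riemann invariant of the $2$-$3$ contact. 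This matches all three bullets of Definition~\ref{def:networksolution_ADPAR}.

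The main obstacle I anticipate is bookkeeping rather than analysis: one must verify that the decoupling over the outgoing roads is legitimate, i.e.\ that the road-dependent pressures $p^{**}_j$ do not interact, and that the prescribed $\alpha_{ji}$ and $\beta_{ij}$ are mutually compatible so that the flux-maximization problem is non-degenerate and singles out a unique solution, exactly as in the LWR case. No genuinely new difficulty arises beyond Lemma~\ref{lem:nto1}; the content is in checking that freezing the homogenized pressures reduces the ARZ network problem at the junction to the already-solved LWR-type maximization.
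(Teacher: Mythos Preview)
Your proposal is correct and follows the same approach as the paper: reduce the $n$-$m$ junction to $m$ independent applications of Lemma~\ref{lem:nto1}, one per outgoing road $j$, using the fixed priority column $(\beta_{ij})_i$. The paper's own proof is essentially a two-line pointer to Lemma~\ref{lem:nto1}; your additional paragraphs on the flux-maximization step and the half-Riemann construction simply spell out what the paper leaves implicit by that reference.
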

	
	\begin{proof}
		Once the ratios of the incoming fluxes $(\beta_{ij})_{i \in \delta_{v}^-}$ for an outgoing road $j$ are fixed, the approximation of the homogenized pressure is determined using Lemma~\ref{lem:nto1}. On each outgoing road $j$, the pressure is then given by~\eqref{eq:pressure_ntom}.
	\end{proof}
	\section{Numerical solution procedure for the traffic network} \label{sec:numscheme}
	
	We introduce the following notations used for the numerical solution procedure. Let $Y_i(x,t) = (\rho_i,\rho_i w_i,\rho_i c_i)(x,t)$ denote the traffic state in conservative variables on road $i$ at position $x$ and time $t$, whereas $f(Y_i) = (\rho_i v_i, \rho_i w_i v_i, \rho_i c_i v_i)$ denotes the flux. We introduce a grid in time and space with step sizes $\Delta t$ and $\Delta x$ to discretize the system~\eqref{eq:ADPAR_Eulerian}. Each road is divided into $N_{x_i} = L_i/ \Delta x$ cells of equal size and we consider a finite number of time discretizations $N_t = T/ \Delta t$. Let $I_{i,j}$ denote the open interval $(x_{i,j-0.5}, x_{i,j+0.5})$ for $j=1, \dots N_{x_i}$, and let $Y_{i,j}^s$ denote the average value of the function $Y_i(x,t)$ on the interval $I_{i,j}$ at time $t^s = s \Delta t$, i.e.,
	\begin{align*}
	Y_{i,j}^s =  \frac{1}{\Delta x } \int_{I_{i,j}} Y_{i,j}(x,t^s) \, \mathrm{dx},
	\end{align*}
	such that the following CFL condition is satisfied:
	\begin{align}
	\frac{\Delta t}{\Delta x} \max_i \max_j \lbrace \abs{\lambda_l(Y_{i,j}^s)}, l=1,2,3 \rbrace\leq \frac{1}{2} \qquad s=1, \dots, N_t. 	\label{eq:CFL}
	\end{align}
	
	\subsection{The Transport-Equilibrium scheme}
	
	We discretize~\eqref{eq:ADPAR_Eulerian} using a trans\-port-equilibrium scheme based on a Godunov discretization. The analysis in~\ref{sec:pressurederivation} highlighted the importance of the description of the Lagrangian marker since its variation results in the recalculation of the pressure. The Godunov scheme is known to have unphysical oscillations, at least for realistic grid sizes, which reduce the quality of the numerical solution. In~\cite{ChaGoa2007}, it was shown, that the scheme does not comply with the maximum principle on the Riemann invariants for the system~\eqref{eq:ARZ}. To better treat the 2-3-contact discontinuity and ensure the correct depiction of the Riemann invariant $w$, we use the transport equilibrium (TE) scheme~\cite{ChaGoa2007} and expand it to the AP system. The TE scheme is a wave splitting strategy where the contact discontinuities and the 1-waves evolve separately. The first step only accounts for the contact discontinuity and the second step focuses on the 1-wave. Let $\tilde{Y}(Y_-,Y_+)$ denote  the intermediate state in conservative variables in the solution to the Riemann problem in Section~\ref{sec:homogenizedsystem} with initial states $Y_-$ on the left and $Y_+$ on the right. The first step is based on Glimm's random sampling strategy. The intermediate value $Y_{i,j}^{s+1/2}$ is determined by means of a well distributed random sequence $({\alpha}_s)$ within $(0,1)$ and is set to
	\begin{align*}
	Y_{i,j}^{s+ \nice12} = \begin{cases}
	\tilde{Y}(Y_{i,j-1}^{s}, Y_{i,j}^{s}) &\text{ if } {\alpha}_{s+1} \in (0,  \frac{\Delta t}{\Delta x}v_{i,j}^s), \\
	Y_{i,j}^{s} &\text{ if } {\alpha}_{s+1} \in [\frac{\Delta t}{\Delta x} v_{i,j}^s ,1).
	\end{cases}
	\end{align*}
	Here, we use the van der Corput random sequence defined by 
	$
	{\alpha}_s = \sum_{l=0}^{z} i_l 2^{-(l+1)},
	$
	computed using the binary expansion of the integer $s = \sum_{l=0}^{z} i_l 2^l$, $i_l \in \lbrace 0,1 \rbrace$. In other words, the random sampling decides whether the intermediate value is set to the initial state or to the state of a possibly present contact discontinuity. In either case, the correct fluxes through the cell interfaces have to be determined in the second step. In the absence of a contact discontinuity, the scheme is equivalent to the Godunov scheme. Assuming the CFL condition~\eqref{eq:CFL}, the complete scheme is given by 
	\begin{align}
	Y_{i,j}^{s+1} &= Y_{i,j}^{s} - \frac{\Delta t}{\Delta x} \left( F_{i,j+\nice12}^{s+\nice12,L} - F_{i,j-\nice12}^{s+\nice12,R} \right), \label{eq:TEscheme}
	\end{align}
	where the left and right numerical flux functions are defined as follows
	\begin{align*}
	F_{i,j+\nice12}^{s+\nice12,L} &= G_{i,j+\nice12}^s(Y_{i,j}^{s+\nice12}, Y_{i,j+1}^s), \\
	F_{i,j-\nice12}^{s+\nice12,R} &= \begin{cases}
	G_{i,j-\nice12}^s(Y_{i,j-1}^s, Y_{i,j}^{s+\nice12}) &\text{ if } \tilde{Y}(Y_{i,j-1}^{s}, Y_{i,j}^{s+\nice12}) = Y_{i,j}^{s+\nice12}, \\
	f(Y_{i,j}^{s+\nice12}) &\text{ otherwise}.
	\end{cases}
	\end{align*}
	
	The flux terms $G$ are the usual Godunov fluxes given by
	\begin{align*}
	G_{i,j+\nice12}^s &= G_{i,j+\nice12}^s(Y_{i,j}^s, Y_{i,j+1}^s) =\colvector{q_{i,j+\nice12}^s \\ w_{i,j}^s q_{i,j+\nice12}^s \\ c_{i,j}^s q_{i,j + \nice12}^s }, \notag \\ 
	\text{ where } q_{i,j+\nice12}^s &= \min\lbrace D^{\ADPARZ}(\rho_{i,j}^s, w_{i,j}^s, c_{i,j}^s), S^{\ADPARZ}(\tilde{\rho}_{i,j+1}^s, w_{i,j}^s, c_{i,j}^s) \rbrace, 
	\end{align*}
	and $\tilde{\rho}_{i,j+1}^s$ is either given by the intersection of the curves $\lbrace w_i(Y) = w_{i,j}^s, c_i(Y) = c_{i,j}^s \rbrace$ and $\lbrace v_i(Y) = v_{i,j+1}^s \rbrace$, if this intersection exists; otherwise $\tilde{\rho}_{i,j+1}^s$ is set to zero. 
	
	\subsubsection{The boundary conditions}
	
	The flux terms $F^{s+\nice12,R}_{i,j-\nice12}$ for $j=1$ and $F^{s+\nice12,L}_{i,j+\nice12}$ for $j=N_{x_e}$, in the TE scheme~\eqref{eq:TEscheme}, are obtained by coupling the boundary conditions. The flux at the junction is determined by the coupling condition, which depends upon the type of the junction. We illustrate the computation of the boundary fluxes at the 2-1-junction 
	and the Riemann data 
	$Y_{i,0} = (\rho_{i,0}, \rho_{i,0} w_{i,0}, \rho_{i,0} c_{i,0}), i=1,2,3$.
	Assume that the priorities are $\beta$ and $1-\beta$ and that the pressure on the outgoing road is $p_{3,0}(\rho) = c_{3,0} \rho$. To determine the solution to the Riemann problem according to Definition~\ref{def:networksolution_ADPAR}, we proceed as follows: We compute the incoming $w$-value $\overline{w_3}$ given by~\eqref{eq:ADPAR_w3} and the new constant for the pressure function $\overline{c_3} = c_{3,0}  c( (\beta, 1- \beta), (w_{1,0}, w_{2,0})) $ defined by Lemma~\ref{lem:nto1} once at $t=0$. The demand and the supply at the junction are
	\begin{align}\label{eq:ADPAR_demandsupply_21}
	D_1 = D^{\ADPARZ} (\rho_{1,0},w_{1,0}, c_{1,0}) \quad
	D_2 = D^{\ADPARZ} (\rho_{2,0},w_{2,0}, c_{2,0}) \quad
	S_3 = S^{\ADPARZ}(\tilde{\rho}, \overline{w_3},\overline{c_3}),
	\end{align}
	where $\tilde{\rho}$ is either given by the intersection of the curves $\lbrace w(Y) = v + p^{**}(\rho) = \overline{w_3}, c_3(Y) = \overline{c_3} \rbrace$ and $\lbrace v(Y) = v_{3,0} \rbrace$ or $\tilde{\rho} = 0$. The flow into the outgoing road is given by
	\begin{align*} 
	\colvector{q_3 \\ \overline{w_3} q_3 \\ \overline{c_3} q_3 } ~~~ \text{where } q_3 = \min \lbrace D_1 / \beta, D_2 / (1- \beta), S_3 \rbrace, 
	\end{align*}
	and the flows at the end of the two incoming roads are given by
	\begin{align*} 
	\colvector{q_1 \\ w_1 q_1 \\ c_1 q_1 } ~~~ \text{where } q_1 = \beta q_3 \qquad \qquad
	\colvector{q_2 \\ w_2 q_2 \\ c_2 q_2 } ~~~ \text{where } q_2 = (1-\beta) q_3. 
	\end{align*}
	
	The new pressure law on the outgoing road is $p^{**}(\rho) =\overline{c_3} \rho$. We illustrate now, how to determine the boundary states for $s=0$. For the computation with the Godunov scheme, the fluxes $q_1,q_2$ and $q_3$ would be sufficient, since the scheme is based purely on flux terms. In contrast, the computation of the flux terms  $F^{\nice12,R}_{i,\nice12}$ and $F^{\nice12,L}_{i,N_{x_i}+\nice12}$, in the TE scheme, requires the density values of the states $Y_{3,0}^0$ and $Y_{1,N_{x_1}}^0,Y_{2,N_{x_2}}^0 $. We begin with the computation of $Y_{3,0}^0$. Once the flux $q_3$ is determined, we can compute two states for which 
	\begin{equation}\label{eq:TEscheme_boundaryvalue1}
	f_1(Y) = q_3. 
	\end{equation}
	Since waves on the outgoing road must have positive velocities, the state $Y_{i,0}^s$ is uniquely determined as the state such that
	\begin{equation} \label{eq:TEscheme_boundaryvalue2}
	\lambda_1(Y) \geq 0.
	\end{equation}
	We can obtain the boundary state $Y_{3,0}^0 =(\rho, \rho w, \rho c)$ defined by~\eqref{eq:TEscheme_boundaryvalue1}-\eqref{eq:TEscheme_boundaryvalue2} as follows.
	\begin{align*}
	\rho = \frac{\overline{w_3}}{2 \overline{c_3}} - \sqrt{\left( \frac{\overline{w_3}}{2 \overline{c_3}} \right)^2 - \frac{q_3}{\overline{c_3}}}, \qquad w = \overline{w_3}, \qquad c = \overline{c_3}.
	\end{align*}
	The flux term $F_{3,\nice12}^{\nice12,R}$ can then be computed using random sampling and the states $Y_{3,0},Y_{3,1}$, see~\eqref{eq:TEscheme}. Analogously, we proceed for the end of road $i=1,2$, where we have to choose the state $Y_{i,N_{x_i}+1}$ with negative first eigenvalue
	\begin{equation} \label{eq:TEscheme_boundaryvalue3}
	\lambda_1(Y) \leq 0.
	\end{equation}
	Thus, for road 1, the boundary state $Y_{1,N_{x_1}}^0$ is given by 
	\begin{align*}
	\rho = \frac{w_1}{2 c_1} + \sqrt{\left( \frac{w_1}{2 c_1} \right)^2 - \frac{q_1}{c_1}}, \qquad w = w_1, \qquad c = c_1,
	\end{align*}
	and one can proceed similarly for the second road. The procedure for obtaining the boundary states for other junctions is identical. Once the flux $q_i$ for road $i$ at a junction is determined, we can compute the two states $Y$, such that $f_1(Y) = q_i$ and choose the correct state $Y$ depending on whether the boundary state is evaluated at the beginning or the end of a road $i$. 
	\begin{algorithm}
		\caption{Numerical simulation of a network}\label{alg:networksimulation}
		\begin{algorithmic}[1]
			\REQUIRE Roads $i=1,..,N_e$, nodes $k=1,..,N_v$. Initial data $\rho_i(x,0),w_i(x,0),c_i(x,0)$
			\ENSURE Densities $\rho_i(x,t)$, Lagrangian marker $w_i(x,t)$ and pressure coefficients $c_i(x,t)$ for all times $t^s = s \Delta t$ with $s \in \{1,\dots, N_t \}$ 
			\FOR {$s = 0,1, \dots, N_t$}
			\FOR {$k = 1, \dots, N_v$} 
			\IF { $k$ is a merging junction and $s = 0$ or  $\overline{w}^{s-1} \neq \overline{w}^s$}
			\STATE Update coefficient for the pressure according to Lemma~\ref{lem:nto1}.
			\ELSE 
			\STATE Use pressure coefficient of $t^{s-1}$.
			\ENDIF 
			\STATE Compute the maximum flux at the junction at time $t^s$ with demand and supply~\eqref{eq:ADPAR_Demand}-\eqref{eq:ADPAR_Supply} respecting the coupling condition~\eqref{eq:ADPAR_demandsupply_21}.
			\ENDFOR
			\FOR {$i = 1, \dots N_e$}
			\STATE Determine $a_{s+1}$ and the boundary states $Y^s_{i,0}, Y^s_{i,N_{x_i}+1}$ according to~\eqref{eq:TEscheme_boundaryvalue1}-\eqref{eq:TEscheme_boundaryvalue3} from the fluxes at the junction. Compute the solution at time $t^{s+1}$ with the TE scheme~\eqref{eq:TEscheme}.
			\ENDFOR
			\ENDFOR
		\end{algorithmic}
	\end{algorithm}

%
	\section{Computational results} \label{sec:computationalresults}
	
	In this section, we provide  numerical results  to compare our solution with  the homogenized solution  using the TE scheme. We also provide  numerical results  in the case of time-dependent
	boundary data and a comparison to the  LWR solution. 
	
	\subsection{The Riemann problem at a 2-1 junction}

	We investigate the 2-1-junction and compare our network solution to the solution with true homogenized pressure to show that we can indeed approximate the truly homogenized solution.Figure~\ref{img:Riemann21_HerMouRas_comparison} depicts the solution to a Riemann problem with $\vec{\beta} = (0.5, 0.5)$. The initial data is $(\rho_1, w_1, c_1) = (3,2,1), (\rho_2,w_2, c_2) = (2,1, 1), (\rho_3, w_3, c_3) = (3,2,1)$. As shown in Figure~\ref{img:Riemann21_HerMouRas_comparison},  the solution on the outgoing road with the approximated pressure $p^{**}$ is close to the solution using  pressure $p^*$, determined by the homgenization process described in~\cite{HerMouRas2006}. Moreover, the exact solution of the Riemann problem with the approximated pressure is compared with the solution given by the TE scheme in Figure~\ref{img:Riemann21_gridconvergence}. Numerically, the pressure coefficient is changed once at $t=0$ according to \eqref{eq:approxpressure_21} for the Riemann problem since after the initial interaction all waves emerge from the junction. The fluxes at the junction are computed using~\eqref{eq:ADPAR_demandsupply_21}. The simulation procedure to obtain the solution on the whole network is summarized in Algorithm~\ref{alg:networksimulation}. For decreasing $\Delta x$ and $ \Delta t = \Delta x / 10$ fulfilling the CFL condition~\eqref{eq:CFL}, we see convergence towards the exact solution of the Riemann problem as Figure~\ref{img:Riemann21_gridconvergence} illustrates for the 1-wave at the junction and the contact discontinuity in the numerical solution at $t=0.12$.

	\begin{figure}[tbhp]
		\centering
		\begin{subfigure}{0.45\textwidth}
%
%
\begin{tikzpicture}
\setlength\fwidth{0.85\textwidth}
\begin{axis}[%
width=0.951\fwidth,
height=0.75\fwidth,
at={(0\fwidth,0\fwidth)},
scale only axis,
xmin=0,
xmax=0.5,
xlabel style={font=\color{white!15!black}},
xlabel={$x$},
ymin=1,
ymax=4,
ylabel style={font=\color{white!15!black}},
ylabel={$\rho(x,t)$},
axis background/.style={fill=white},
axis x line*=bottom,
axis y line*=left,
legend style={at={(0.7,1.05)}, legend cell align=left, align=left, draw=white!15!black,legend columns=1,font=\footnotesize}
]
\addplot [color=black, line width=1.0pt]
  table[row sep=crcr]{%
0.0012500000000002	1.99489796054904\\
0.0687500000000001	1.71938775512687\\
0.07125	1.71428571289552\\
0.27875	1.71428571289552\\
0.28125	3\\
0.50125	3\\
};
\addlegendentry{solution for $p^{**}$ };

\addplot [color=black, dotted, line width=1.0pt]
  table[row sep=crcr]{%
0.0012500000000002	2.02605641906086\\
0.0562499999999999	1.79329399197352\\
0.0987499999999999	1.61227464620003\\
0.11125	1.55877031096825\\
0.11375	1.55555555555576\\
0.27875	1.55555555555576\\
0.28125	3\\
0.50125	3\\
};
\addlegendentry{solution for $p^{*}$ };

\end{axis}
\end{tikzpicture}%
			\subcaption{True homogenized and AP solution}
			\label{img:Riemann21_HerMouRas_comparison}
		\end{subfigure}
		\begin{subfigure}{0.45\textwidth}
%
%
\definecolor{mycolor1}{rgb}{0.00000,0.44700,0.74100}%
\begin{tikzpicture}
\setlength\fwidth{0.85\textwidth}
\begin{axis}[%
width=0.951\fwidth,
height=0.75\fwidth,
at={(0\fwidth,0\fwidth)},
scale only axis,
xmin=0,
xmax=0.5,
xlabel style={font=\color{white!15!black}},
xlabel={$x$},
ymin=1,
ymax=9,
ylabel style={font=\color{white!15!black}},
ylabel={$\rho(x,t)$},
axis background/.style={fill=white},
axis x line*=bottom,
axis y line*=left,
legend style={at={(0.55,1.05)}, anchor=north, legend cell align=left, align=left, draw=white!15!black, legend columns=2,font=\footnotesize}
]
\addplot [color=black, line width=1.0pt]
  table[row sep=crcr]{%
0.0012500000000002	1.99489796054904\\
0.0687500000000001	1.71938775512687\\
0.07125	1.71428571289552\\
0.27875	1.71428571289552\\
0.28125	3\\
0.50125	3\\
};
\addlegendentry{sol. for $p^{**}$}

\addplot [color=blue, dotted, line width=1.0pt]
  table[row sep=crcr]{%
0.00499999999999989	1.93588708310867\\
0.0150000000000001	1.89799969053198\\
0.0249999999999999	1.86456134401663\\
0.0350000000000001	1.83443805438881\\
0.0449999999999999	1.80753542333553\\
0.0550000000000002	1.7840448097683\\
0.0649999999999999	1.7642151145911\\
0.0750000000000002	1.7482106700442\\
0.085	1.7359987185979\\
0.0950000000000002	1.72728408624638\\
0.105	1.72152260784112\\
0.115	1.71801863140759\\
0.125	1.71606642211358\\
0.135	1.71507125916847\\
0.145	1.71460657640464\\
0.165	1.71432853955161\\
0.225	1.7142857327461\\
0.285	1.7142857142858\\
0.295	3\\
0.505	3\\
};
\addlegendentry{$\Delta x = 1/100$}

\addplot [color=blue, dashed, line width=1.0pt]
  table[row sep=crcr]{%
0.00249999999999995	1.96416671991074\\
0.00749999999999984	1.94228439556002\\
0.0125000000000002	1.92217236929286\\
0.0175000000000001	1.90303416778777\\
0.0225	1.88463055399314\\
0.0274999999999999	1.86688404539125\\
0.0325000000000002	1.8497881438417\\
0.0375000000000001	1.83337697059108\\
0.0425	1.81771263606463\\
0.0474999999999999	1.80287860511933\\
0.0525000000000002	1.78897463087877\\
0.0575000000000001	1.77611101612366\\
0.0625	1.76440080855455\\
0.0674999999999999	1.75394915558454\\
0.0725000000000002	1.74483989047231\\
0.0775000000000001	1.73712065334414\\
0.0825	1.73078929215407\\
0.0874999999999999	1.72578535213137\\
0.0924999999999998	1.72199030181802\\
0.0975000000000001	1.71923819339652\\
0.1025	1.71733511204511\\
0.1075	1.71608255390948\\
0.1125	1.71529858302311\\
0.1175	1.71483198654774\\
0.1275	1.71442521744737\\
0.1425	1.71429889199175\\
0.2425	1.71428571428572\\
0.2825	1.71428571428572\\
0.2875	3\\
0.5025	3\\
};
\addlegendentry{$\Delta x = 1/200$}

\addplot [color=blue, line width=1.0pt]
  table[row sep=crcr]{%
0.0012500000000002	1.98094205723231\\
0.00375000000000014	1.96919767230349\\
0.00875000000000004	1.94778929282359\\
0.0162499999999999	1.91752896371332\\
0.0237500000000002	1.88844922923711\\
0.03125	1.86035552217728\\
0.0387499999999998	1.83335756288082\\
0.0437500000000002	1.81610639734472\\
0.0487500000000001	1.7996032277179\\
0.05375	1.78402145061584\\
0.0587499999999999	1.76957538474674\\
0.0637500000000002	1.75651343586688\\
0.0662500000000001	1.75058362238953\\
0.0687500000000001	1.74509680359237\\
0.07125	1.74008073975885\\
0.07375	1.73555757829012\\
0.0762499999999999	1.73154163279487\\
0.0787499999999999	1.728037319631\\
0.0812499999999998	1.72503757801987\\
0.0837500000000002	1.72252311186071\\
0.0862500000000002	1.72046272109128\\
0.0887500000000001	1.71881483118386\\
0.0912500000000001	1.71753010753082\\
0.09375	1.71655481569412\\
0.0962499999999999	1.71583443057973\\
0.0987499999999999	1.71531696173102\\
0.10375	1.71471015146004\\
0.10875	1.71444400286851\\
0.11875	1.71430223529651\\
0.16875	1.71428571428702\\
0.28125	1.71428571428572\\
0.28375	3\\
0.50125	3\\
};
\addlegendentry{$\Delta x = 1/400$}

\draw[draw=black] (axis cs:0,1.71428571289552) rectangle (axis cs:0.09,1.99489796054904);
\addplot [color=black]
  table[row sep=crcr]{%
0	1.99489796054904\\
0.0451612903225804	6.79141104294478\\
};

\addplot [color=black]
  table[row sep=crcr]{%
0.0899999999999999	1.71428571289552\\
0.174193548387097	2.86503067484663\\
};

\draw[draw=black] (axis cs:0.27,1.71428571289552) rectangle (axis cs:0.3,3);
\addplot [color=black]
  table[row sep=crcr]{%
0.27	3\\
0.354838709677419	7.28220858895705\\
};

\addplot [color=black]
  table[row sep=crcr]{%
0.3	1.71428571289552\\
0.483870967741935	3.3558282208589\\
};

\end{axis}

\begin{axis}[%
ticks=none,
width=0.245\fwidth,
height=0.368\fwidth,
at={(0.086\fwidth,0.175\fwidth)},
scale only axis,
xmin=0,
xmax=0.09,
ymin=1.71428571289552,
ymax=1.99489796054904,
axis background/.style={fill=white},
legend style={legend cell align=left, align=left, draw=white!15!black}
]
\addplot [color=mycolor1]
  table[row sep=crcr]{%
0.00124999999999997	1.99489796054904\\
0.0687500000000001	1.71938775512687\\
0.07125	1.71428571289552\\
0.0912500000000001	1.71428571289552\\
};

\addplot [color=black, line width=1.0pt]
  table[row sep=crcr]{%
0.00124999999999997	1.99489796054904\\
0.0687500000000001	1.71938775512687\\
0.07125	1.71428571289552\\
0.0912500000000001	1.71428571289552\\
};

\addplot [color=blue, dotted, line width=1.0pt]
  table[row sep=crcr]{%
0.00499999999999989	1.93588708310867\\
0.0149999999999999	1.89799969053198\\
0.0249999999999999	1.86456134401663\\
0.0349999999999999	1.83443805438881\\
0.0449999999999999	1.80753542333553\\
0.0549999999999999	1.7840448097683\\
0.0649999999999999	1.7642151145911\\
0.075	1.7482106700442\\
0.085	1.7359987185979\\
0.095	1.72728408624638\\
};

\addplot [color=blue, dashed, line width=1.0pt]
  table[row sep=crcr]{%
0.00249999999999995	1.96416671991074\\
0.00750000000000006	1.94228439556002\\
0.0125	1.92217236929286\\
0.0175000000000001	1.90303416778777\\
0.0225	1.88463055399314\\
0.0275000000000001	1.86688404539125\\
0.0325	1.8497881438417\\
0.0375000000000001	1.83337697059108\\
0.0425	1.81771263606463\\
0.0475000000000001	1.80287860511933\\
0.0525	1.78897463087877\\
0.0575000000000001	1.77611101612366\\
0.0625	1.76440080855455\\
0.0675000000000001	1.75394915558454\\
0.0725	1.74483989047231\\
0.0774999999999999	1.73712065334414\\
0.0825	1.73078929215407\\
0.0874999999999999	1.72578535213137\\
0.0925	1.72199030181802\\
};

\addplot [color=blue, line width=1.0pt]
  table[row sep=crcr]{%
0.00124999999999997	1.98094205723231\\
0.00374999999999992	1.96919767230349\\
0.00625000000000009	1.95829390478917\\
0.00875000000000004	1.94778929282359\\
0.01125	1.93753388478475\\
0.0137499999999999	1.9274590905061\\
0.0162500000000001	1.91752896371332\\
0.01875	1.90772358102131\\
0.02125	1.89803206130447\\
0.0237499999999999	1.88844922923711\\
0.0262500000000001	1.8789738843588\\
0.0287500000000001	1.86960786209606\\
0.03125	1.86035552217728\\
0.0337499999999999	1.85122348575211\\
0.0362500000000001	1.84222052667591\\
0.0387500000000001	1.83335756288082\\
0.04125	1.82464771360197\\
0.04375	1.81610639734472\\
0.0462499999999999	1.80775144832036\\
0.0487500000000001	1.7996032277179\\
0.05125	1.79168470165197\\
0.05375	1.78402145061584\\
0.0562499999999999	1.77664156657166\\
0.0587500000000001	1.76957538474674\\
0.06125	1.76285499001112\\
0.06375	1.75651343586688\\
0.0662499999999999	1.75058362238953\\
0.0687500000000001	1.74509680359237\\
0.07125	1.74008073975885\\
0.07375	1.73555757829012\\
0.0762499999999999	1.73154163279487\\
0.0787500000000001	1.728037319631\\
0.08125	1.72503757801987\\
0.08375	1.72252311186071\\
0.0862499999999999	1.72046272109128\\
0.0887500000000001	1.71881483118386\\
0.0912500000000001	1.71753010753082\\
};

\end{axis}

\begin{axis}[%
ticks = none,
width=0.245\fwidth,
height=0.368\fwidth,
at={(0.675\fwidth,0.221\fwidth)},
scale only axis,
xmin=0.27,
xmax=0.3,
ymin=1.71428571289552,
ymax=3,
axis background/.style={fill=white},
legend style={legend cell align=left, align=left, draw=white!15!black}
]
\addplot [color=mycolor1]
  table[row sep=crcr]{%
0.26875	1.71428571289552\\
0.27875	1.71428571289552\\
0.28125	3\\
0.30125	3\\
};

\addplot [color=black, line width=1.0pt]
  table[row sep=crcr]{%
0.26875	1.71428571289552\\
0.27875	1.71428571289552\\
0.28125	3\\
0.30125	3\\
};

\addplot [color=blue, dotted, line width=1.0pt]
  table[row sep=crcr]{%
0.267	1.71428571430126\\
0.285	1.7142857142858\\
0.295	3\\
0.303	3\\
};

\addplot [color=blue, dashed, line width=1.0pt]
  table[row sep=crcr]{%
0.2675	1.71428571428571\\
0.2825	1.71428571428572\\
0.2875	3\\
0.3025	3\\
};

\addplot [color=blue, line width=1.0pt]
  table[row sep=crcr]{%
0.26875	1.71428571428572\\
0.28125	1.71428571428572\\
0.28375	3\\
0.30125	3\\
};

\end{axis}
\end{tikzpicture}%
			\subcaption{Grid solutions and AP solution}
			\label{img:Riemann21_gridconvergence}
		\end{subfigure}
		\caption{Solution on the outgoing road for $\beta = (0.5, 0.5)$.}
	\end{figure}
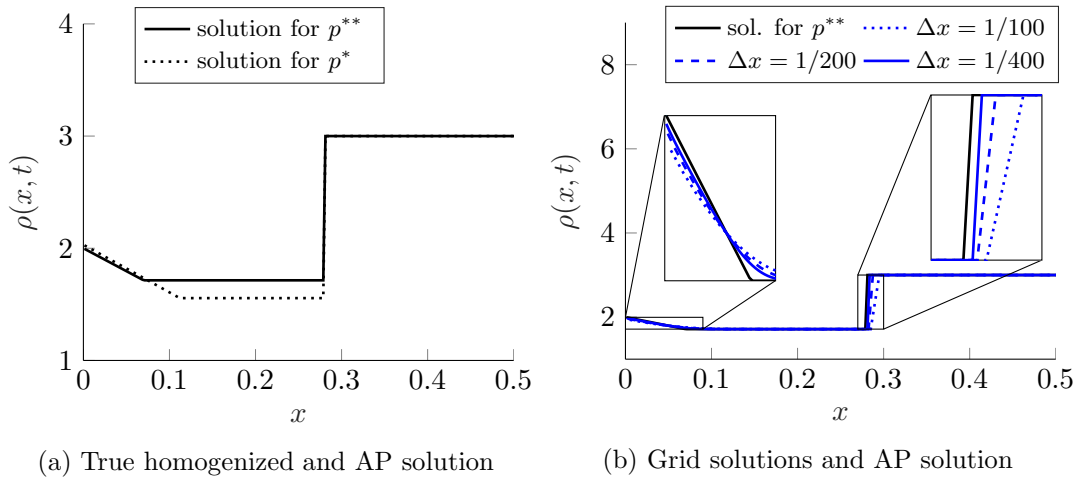
	
	\subsection{Network of merging junctions}
	
	We consider a network, which consists of a sequence of 2-1-junctions (in total $N$ merging junctions), see Figure~\ref{img:NetworkSequential}. Moreover, we consider Riemann data $(\rho_i,w_i,c_i)$ on each road $i$. The homogenized Lagrangian marker on each road $i$ is denoted by $\overline{w_i}$. The priorities at merges are set to $\beta$ and $(1-\beta)$ with $0 < \beta < 1$. We do not consider $\beta \in \lbrace 0,1 \rbrace$ here since the network would reduce to a network with consecutive 1-1-junctions for which there is no need to adapt the pressures (compared to~\ref{sec:nto1junction}).

	\begin{figure}[tbhp]
		\centering
		\resizebox{0.8\columnwidth}{!}{
			{\normalsize
				\begin{tikzpicture}[every node/.style={text width = 0.65cm}] 
				\node (B) at (3, -1) [circle,draw , align=center] {B};
				\node (C) at (6, -1) [circle,draw, align=center] {C};
				\node (D) at (9, -1) [circle,draw, align=center] {D};
				
				\node (E) at (0, -3) [circle,draw, align=center] {A };
				\node (F) at (3, -3) [circle,draw, align=center] {M1};
				\node (G) at (6, -3) [circle,draw, align=center] {M2};
				\node (H) at (9, -3) [circle,draw, align=center] {MN};
				\node (I) at (12, -3) [circle,draw, align=center] {E};
				
				\draw[->, very thick] (E) to node [below,text width = 2cm, align=center] { \small Road 0, $\beta$ } (F);
				\draw[->, very thick] (F) to node [below,text width = 2cm, align=center] {\small Road 1, $\beta$ }(G);
				\draw[->, very thick] (G) to node [below,text width = 2cm, align=center] {\small ... }(H);
				\draw[->, very thick] (H) to node [below,text width = 2cm, align=center] {\small Road N }(I);

				\draw[->, very thick] (B) to node [left,text width = 2cm, align=center] {  \small Road $N+1$, \\ $(1-\beta)$} (F);
				\draw[->, very thick] (C) to node [left,text width = 2cm, align=center] {\small Road $N+2$, \\ $(1-\beta)$ }(G);
				\draw[->, very thick] (D) to node [left,text width = 2cm, align=center] { \small Road $2N$, \\$(1-\beta)$} (H);
				
				\end{tikzpicture}
		}}
		\caption{Network of merging junctions.}
		\label{img:NetworkSequential}
	\end{figure}
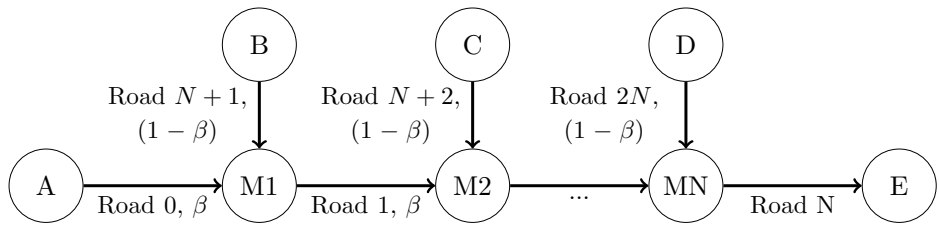
	
	The network in Figure~\ref{img:NetworkSequential} is used to investigate how a pertubation in the Lagrangian marker propagates through the network. Initially, the Lagrangian marker is identical on all roads, except for one single incoming road. We study the wave structure and the propagation speed of the perturbation. 
	
	\begin{lemma} \label{lem:Sequential}
		Consider the network shown in Figure~\ref{img:NetworkSequential} with priorities $\beta$ and $(1-\beta)$, initial data $U_{i,0} =(\rho_{i,0}, w_{i,0}, c_{i,0}) $ with $\rho_i, w_i, c_i >0,~i=0,\dots 2N$. We set initial conditions $w_{0,0} = b$ and $w_{i,0} = a, i =1,\dots, 2N,~a,b>0$ and pressures $p_i(\rho) = c_{i,0} \rho $. At a merging junction $l \in \lbrace 1, \dots N \rbrace$,  the time $t_l^*$, where $p^{**}_l \not = p_{l,0}$  depends on the initial conditions and may be infinite. The approximated  pressure on road $l$ is given by  
		
		\begin{align} \label{eq:lemSequential_pressure}
		p_l^{**}(\rho) =  c_{l,0} \left(1 + \beta (1- \beta) \frac{ \left( a - \overline{w}_{l-1} \right)^2}{a \overline{w}_{l-1}} \right)  \rho =: c_{l,0} \, d_l \,\rho,
		\end{align}
		
		where $(d_l)_l$ is monotonically decreasing sequence and $\lim_{l \rightarrow \infty} d_l = 1$.
	\end{lemma}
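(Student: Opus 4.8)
The plan is to reduce the statement to $N$ successive applications of Lemma~\ref{lem:nto1}, using that the graph in Figure~\ref{img:NetworkSequential} is a directed path: each merge $l\in\{1,\dots,N\}$ has exactly the incoming roads $l-1$ (priority $\beta$) and $N+l$ (priority $1-\beta$) and the single outgoing road $l$, hence is a $2$-$1$ junction of the kind treated in Section~\ref{sec:nto1junction}. By Proposition~\ref{prop:RiemannsolutionADPAR}(ii) the pair $(w,c)$ is a Riemann invariant of the $2$-$3$ contact and is transported on road $l$ with the car velocity, so the perturbation produced on road $0$ by $w_{0,0}=b\neq a=w_{i,0}$ travels downstream along $0\to1\to\dots\to N$ and is processed at the merges in increasing order of $l$. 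This also pins down the meaning of $t_l^*$: it is the time at which this contact reaches junction $l$, which is finite precisely when the velocities carried into junctions $1,\dots,l-1$ are all positive (for instance for free-flow data) and equals $+\infty$ if some road $k<l$ is congested so that the contact does not move, in which case $\overline{w}_l=a$ and $p_l^{**}=p_{l,0}$ for all $t$. I would only sketch this, since the actual value of $t_l^*$ depends on the data and does not enter \eqref{eq:lemSequential_pressure}.

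Write $\overline{w}_l$ for the homogenized Lagrangian marker on road $l$ after the contact has passed junction $l$, and set $\overline{w}_0:=w_{0,0}=b$. At junction $l$ the two incoming markers are $\overline{w}_{l-1}$ (road $l-1$, proportion $\beta$) and $a$ (road $N+l$, proportion $1-\beta$); whenever there is flow, the mixture rule (H3) forces these proportions, so (H4*) together with \eqref{eq:barw_j}/\eqref{eq:homogenizedwwithbeta} yields the linear recursion
\[
\overline{w}_l=\beta\,\overline{w}_{l-1}+(1-\beta)\,a,\qquad \overline{w}_0=b,
\]
with solution $\overline{w}_l=a+\beta^{l}(b-a)$. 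In particular $\overline{w}_l$ always lies in the closed interval with endpoints $a$ and $b$, so $\overline{w}_l>0$ and every quantity below is well defined. Feeding the incoming markers $\overline{w}_{l-1}$ and $a$ (with priorities $\beta$, $1-\beta$) into the $2$-$1$ pressure formula \eqref{eq:approxpressure_21} of Lemma~\ref{lem:nto1} for $\gamma=1$ gives
\[
p_l^{**}(\rho)=c_{l,0}\Bigl(1+\beta(1-\beta)\,\frac{(a-\overline{w}_{l-1})^2}{a\,\overline{w}_{l-1}}\Bigr)\rho=c_{l,0}\,d_l\,\rho,
\]
which is exactly \eqref{eq:lemSequential_pressure}.

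Existence and uniqueness of the network solution in the sense of Definition~\ref{def:networksolution_ADPAR} follow by applying Lemma~\ref{lem:nto1} at each merge in turn and gluing the local solutions along the path, which involves no circular dependence: on each incoming road the pressure is $p_{i,0}$ and the solution solves a half-Riemann problem, on each outgoing road $l$ the solution in the cone $\{0<x<t\,v_{l,0}\}$ is the AP Riemann solution of Proposition~\ref{prop:RiemannsolutionADPAR} with the pressure $p_l^{**}$ just computed, and the junction fluxes are maximal subject to (H1*)--(H3). It then remains to analyse $d_l:=1+\beta(1-\beta)(a-\overline{w}_{l-1})^2/(a\,\overline{w}_{l-1})$. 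Substituting $\overline{w}_{l-1}=a+\beta^{l-1}(b-a)$ and writing $c:=b-a$, $x:=\beta^{l-1}\in(0,1]$ so that $a+cx=\overline{w}_{l-1}>0$, one gets
\[
d_l=1+\beta(1-\beta)\,h(x),\qquad h(x):=\frac{c^{2}x^{2}}{a\,(a+cx)},\qquad h'(x)=\frac{c^{2}\,x\,(2a+cx)}{a\,(a+cx)^{2}}.
\]
Since $a>0$ and $a+cx=\overline{w}_{l-1}>0$, we have $2a+cx=a+(a+cx)>0$, hence $h'(x)>0$ for all $x>0$, i.e.\ $h$ is strictly increasing on the relevant range. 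As $l$ increases, $x_l=\beta^{l-1}$ strictly decreases (because $0<\beta<1$), so $d_l=1+\beta(1-\beta)h(x_l)$ is strictly decreasing if $b\neq a$ and identically $1$ if $b=a$; in both cases $(d_l)_l$ is monotonically decreasing. Since $h$ is continuous with $h(0)=0$ and $x_l\to0$, we conclude $\lim_{l\to\infty}d_l=1+\beta(1-\beta)h(0)=1$.

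The main obstacle is conceptual rather than computational: one has to make the ``junction by junction, in order of arrival'' argument rigorous, i.e.\ verify that when the contact reaches junction $l$ road $l-1$ does carry the homogenized value $\overline{w}_{l-1}$ while road $N+l$ still carries $a$, and handle the degenerate case of vanishing junction flux (where $p_l^{**}=p_{l,0}$ and $t_l^*=+\infty$). Once the recursion $\overline{w}_l=\beta\overline{w}_{l-1}+(1-\beta)a$ is justified, the closed form, the pressure formula \eqref{eq:lemSequential_pressure}, the monotonicity of $(d_l)_l$ and its limit are all elementary.
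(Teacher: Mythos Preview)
Your proposal is correct and follows essentially the same route as the paper: apply Lemma~\ref{lem:nto1} at each $2$--$1$ merge in order, track the contact discontinuity to explain the role and possible infinitude of $t_l^*$, derive the recursion $\overline{w}_l=\beta\,\overline{w}_{l-1}+(1-\beta)a$ (the paper writes the equivalent closed form $\overline{w}_l=\beta^{l}b+(1-\beta)a\sum_{i=0}^{l-1}\beta^{l-i-1}$), and read off \eqref{eq:lemSequential_pressure}. The paper merely asserts that $(d_l)_l$ is decreasing with limit $1$, whereas you supply the missing calculus via $h(x)=c^2x^2/(a(a+cx))$; conversely the paper devotes more space to the qualitative shock/rarefaction interactions that govern $t_l^*$.
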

	
	\begin{proof}
		The propagation of the contact discontinuity through the network determines the interaction time $t_l^*$ of the contact discontinuity with the junction $l$. Without loss of generality, let us consider the dynamics on road 1 after the merging junction. At the merging junctions M1 and M2, there are two interactions to be considered (see Figure~\ref{img:Sequential_Cases}). If the contact discontinuity arising from M1 with positive speed $v_1$ and a shock with negative speed arising from the junction M2 interact, the speed of the contact discontinuity decreases after the interaction and is still non-negative, but possibly zero. The new speed is dependent on the speed of the state on the right of the shock front. In addition to a 1-shock, a 1-rarefaction can arise from the merging junction M2 with positive speed. The contact discontinuity arising from the junction M1 propagates to the junction M2. It changes its speed in the presence of a 1-shock on road 1. In the absence of a 1-shock, its velocity stays unchanged (see Figure~\ref{img:carpath_1rarefaction}). 
		\begin{figure}[tbhp]
			\centering
			\begin{subfigure}{0.45\textwidth}
%
%
\begin{tikzpicture}
\setlength\fwidth{0.8\textwidth}
\begin{axis}[%
width=0.951\fwidth,
height=0.75\fwidth,
at={(0\fwidth,0\fwidth)},
scale only axis,
xmin=0.5,
xmax=2.85,
xtick={  1, 2},
xticklabels={$M_1$,  $M_2$},
xlabel style={font=\color{white!15!black}},
ymin=0,
ymax=1,
ytick={0,0.85},
yticklabels={$t^s$,  $t^{s+1}$},
ylabel style={font=\color{white!15!black}},
axis background/.style={fill=white},
axis x line*=bottom,
axis y line*=left,
legend style={legend cell align=left, align=left, draw=white!15!black}
]
\addplot [color=black, dashed]
table[row sep=crcr]{%
	1	0\\
	1	1\\
};

\addplot [color=black, dashed]
table[row sep=crcr]{%
	2	0\\
	2	1\\
};

\addplot [color=black]
  table[row sep=crcr]{%
1	0\\
1.5	0.5\\
};

\addplot [color=black, dotted]
  table[row sep=crcr]{%
2	0\\
1.5	0.5\\
};

\addplot [color=black]
table[row sep=crcr]{%
	1.5	0.5\\
	1.7	1\\
};

\addplot [color=black, dotted]
table[row sep=crcr]{%
	1	0\\
	0.5	0.5\\
};

\node (A) at (axis cs: 0.8, 0.2) {$w_0$};
\node (B) at (axis cs: 1.8, 0.2) {$w_1$};
\node (C) at (axis cs: 1.2, 0.5) {$\bar{w_1}$};
\node (E) at (axis cs: 2.7, 0.2) {$w_2$};
\end{axis}
\end{tikzpicture}%
				\subcaption{Interaction with a shock wave}
				\label{img:carpath_1shock}
			\end{subfigure}
			\begin{subfigure}{0.45\textwidth}
%
%
\begin{tikzpicture}
\setlength\fwidth{0.8\textwidth}
\begin{axis}[%
width=0.951\fwidth,
height=0.75\fwidth,
at={(0\fwidth,0\fwidth)},
scale only axis,
xmin=0.5,
xmax=2.85,
xtick={  1, 2},
xticklabels={$M_1$,  $M_2$},
xlabel style={font=\color{white!15!black}},
ymin=0,
ymax=1,
ytick={0,0.85},
yticklabels={$t^s$,  $t^{s+1}$},
ylabel style={font=\color{white!15!black}},
axis background/.style={fill=white},
axis x line*=bottom,
axis y line*=left,
legend style={legend cell align=left, align=left, draw=white!15!black}
]

\addplot [color=black, dashed]
table[row sep=crcr]{%
	1	0\\
	1	1\\
};

\addplot [color=black, dashed]
table[row sep=crcr]{%
	2	0\\
	2	1\\
};

\addplot [color=black]
  table[row sep=crcr]{%
1	0\\
2	0.2\\
};

\addplot [color=black]
  table[row sep=crcr]{%
2	0.2\\
2.2	0.5\\
};

\addplot [color=black]
table[row sep=crcr]{%
	2.2	0.5\\
	2.4 0.667\\
};

\addplot [color=black]
table[row sep=crcr]{%
	2.4 0.667\\
	2.55 0.6875\\
};

\addplot [color=black]
table[row sep=crcr]{%
	2.55 0.6875\\
	2.8  0.69\\
};

\addplot [color=black, dotted]
table[row sep=crcr]{%
	2	0\\
	2.2	0.5\\
};

\addplot [color=black, dotted]
table[row sep=crcr]{%
	2	0\\
	2.4	0.667\\
};

\addplot [color=black, dotted]
table[row sep=crcr]{%
	2	0\\
	2.55	0.6875\\
};

\addplot [color=black, dotted]
table[row sep=crcr]{%
	1	0\\
	1.2	1\\
};

\addplot [color=black, dotted]
table[row sep=crcr]{%
	1	0\\
	1.4	1\\
};

\addplot [color=black, dotted]
table[row sep=crcr]{%
	1	0\\
	1.55	1\\
};

\node (A) at (axis cs: 0.8, 0.2) {$w_0$};
\node (B) at (axis cs: 1.8, 0.08) {$w_1$};
\node (C) at (axis cs: 1.2, 0.5) {$\bar{w_1}$};
\node (D) at (axis cs: 2.2, 0.8) {$\bar{w_2}$};
\node (E) at (axis cs: 2.7, 0.2) {$w_2$};
\end{axis}
\end{tikzpicture}%
				\subcaption{Interaction with rarefaction wave}
				\label{img:carpath_1rarefaction}
			\end{subfigure}
			
			\caption{Interaction of a contact discontinuity with 1-waves.}
			\label{img:Sequential_Cases}
		\end{figure}
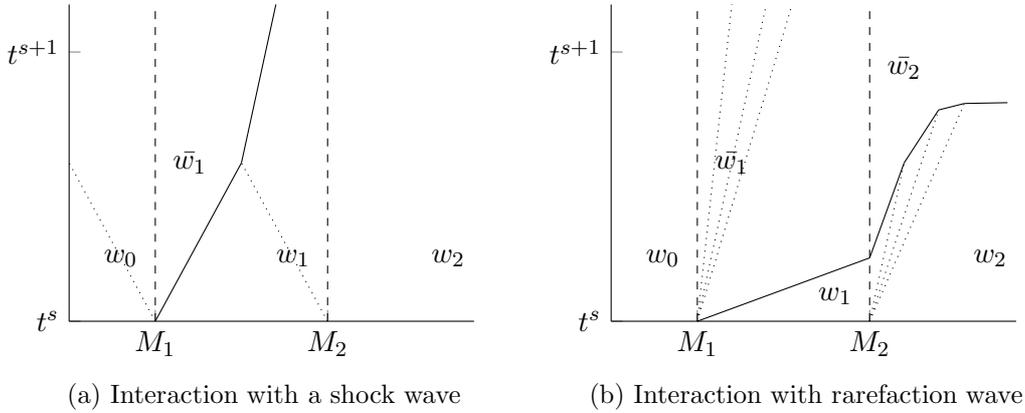	
		
		The contact discontinuity changes its speed at the junction M2 and afterwards interacts with the 1-rarefaction. During the interaction with the rarefaction fan, the contact discontinuity accelerates. Therefore, the times $t_l^*,~l=1, \dots N$, when the changes in the pressure are triggered at the junctions, depend on the initial conditions. The homogenized $w$-value on road $l$, after an interaction is triggered, is given by 
		\begin{align*}
		\overline{w}_l = \beta^{l}b + (1-\beta)a \sum_{i=0}^{l-1} \beta^{l-i-1}.
		\end{align*}
		The sequence $(\overline{w}_l)$ is either monotonically increasing ($a>b$) or decreasing ($b>a$) and $\lim_{l \rightarrow \infty} w_l = a$. At each junction, when the contact discontinuity reaches the junction, the new pressure is computed using Lemma~\ref{lem:nto1}. Then, the new pressure law is given by~\eqref{eq:lemSequential_pressure} where $\overline{w}_0 = b$. 
		Moreover, $(d_l)$ is monotonically decreasing and $\lim_{l \rightarrow \infty} d_l = 1$. Summarizing, $d_1 > d_2 > \dots > d_N$ and in the limit $N \rightarrow \infty$, we have that the pressure on the outgoing road after the last merging junction remains unchanged.
	\end{proof}
	
	Tracking the contact discontinuity is analogous to car path tracking in road networks, see also~\cite{BrePic2008}, with the only difference that each interaction of the contact discontinuity with a 1-wave triggers an additional 1-wave in our case, which has been omitted in the figures for sake of simplicity.Figure ~\ref{img:Disturbance_OneRoad_xt} exemplifies on an $x$-$t$-plane  the solution may look like. The contact discontinuity starts at the junction M1 and propagates at positive speed. 
	Whenever, the contact discontinuity interacts with a 1-wave, the speed of the contact discontinuity changes. Therefore, the new speed depends on the initial data on the roads that are located behind the merges. After an interaction with a rarefaction wave, the speed increases. After an interaction with a shock, the speed decreases. It is also possible that junctions in the network are not reached by the contact discontinuity, if the speed of the contact discontinuity approaches zero after an interaction with a shock wave. 
	In Figure~\ref{img:Disturbance_OneRoad_xt}, the contact discontinuity interacts with a shock wave on road 2, and its speed approaches zero. Therefore, the Lagrangian marker beyond the junction M3 stays unchanged. 
	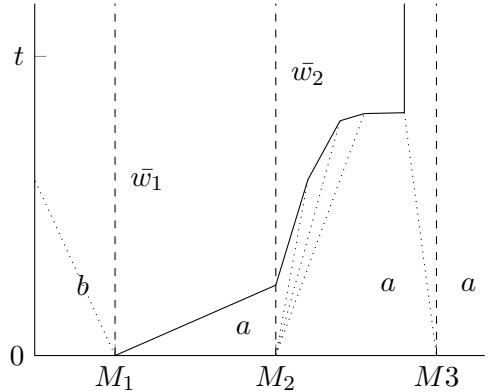
\begin{figure}[tbhp]
		\centering
%
%
\begin{tikzpicture}
\setlength\fwidth{0.4\textwidth}
\begin{axis}[%
width=0.951\fwidth,
height=0.75\fwidth,
at={(0\fwidth,0\fwidth)},
scale only axis,
xmin=0.5,
xmax=3.3,
xtick={  1, 2, 3},
xticklabels={$M_1$,  $M_2$, $M3$},
xlabel style={font=\color{white!15!black}},
ymin=0,
ymax=1,
ytick={0,0.85},
yticklabels={0,  $t$},
ylabel style={font=\color{white!15!black}},
axis background/.style={fill=white},
axis x line*=bottom,
axis y line*=left,
legend style={legend cell align=left, align=left, draw=white!15!black}
]

\addplot [color=black, dashed]
table[row sep=crcr]{%
	1	0\\
	1	1\\
};

\addplot [color=black, dashed]
table[row sep=crcr]{%
	2	0\\
	2	1\\
};

\addplot [color=black, dashed]
table[row sep=crcr]{%
	3	0\\
	3	1\\
};

\addplot [color=black]
  table[row sep=crcr]{%
1	0\\
2	0.2\\
};

\addplot [color=black]
  table[row sep=crcr]{%
2	0.2\\
2.2	0.5\\
};

\addplot [color=black]
table[row sep=crcr]{%
	2.2	0.5\\
	2.4 0.667\\
};

\addplot [color=black]
table[row sep=crcr]{%
	2.4 0.667\\
	2.55 0.6875\\
};

\addplot [color=black]
table[row sep=crcr]{%
	2.55 0.6875\\
	2.8  0.69\\
};

\addplot [color=black]
table[row sep=crcr]{%
	2.8  0.69\\
	2.8   1 \\
};

\addplot [color=black, dotted]
table[row sep=crcr]{%
	1	0\\
	0.5	0.5\\
};

\addplot [color=black, dotted]
table[row sep=crcr]{%
	2	0\\
	2.2	0.5\\
};

\addplot [color=black, dotted]
table[row sep=crcr]{%
	2	0\\
	2.4	0.667\\
};

\addplot [color=black, dotted]
table[row sep=crcr]{%
	2	0\\
	2.55	0.6875\\
};

\addplot [color=black, dotted]
table[row sep=crcr]{%
	2.8	0.69\\
	3	0\\
};

\node (A) at (axis cs: 0.8, 0.2) {$b$};
\node (B) at (axis cs: 1.8, 0.08) {$a$};
\node (C) at (axis cs: 1.2, 0.5) {$\bar{w_1}$};
\node (D) at (axis cs: 2.2, 0.8) {$\bar{w_2}$};
\node (E) at (axis cs: 2.7, 0.2) {$a$};
\node (F) at (axis cs: 3.2, 0.2) {$a$};
\end{axis}
\end{tikzpicture}%
		\caption{Perturbation of the Lagrangian marker at one incoming road.}
		\label{img:Disturbance_OneRoad_xt}
	\end{figure}

	\subsubsection{Numerical evaluation}
	
	We set $a = 2, b=1, \beta=\frac12, N = 10$ and $c_{i,0}=1$.  Each road has length $L_i = L = 0.5$ and the end of the time horizon is $T=12$. We consider two scenarios. In the first scenario, we set initial densities on all roads to $\rho_{i,0} = 0.3, \; i =1, \dots, N$ (free flow case). Note that we will refer to this case as the 'free flow case' to indicate the corresponding initial data.  
	\par 
	The second scenario considered is identical to the first one, except for the density on the last road, where we consider maximal congestion $\rho_{N,0} = 1$ (congested case). Since the last road is congested, a shock wave moves backwards through the network. Simulation results are obtained with the discretization $\Delta x = 1/100$ and $\Delta t = \Delta x/4$, fulfilling the CFL condition~\eqref{eq:CFL}. Table~\ref{tab:Sequential_adaptiontimes} shows the adaption times $t_l^*$ at the junctions $l = 1, \dots, 10$. We see that the pressure is recalculated\footnote{Numerically, we update the pressure coefficient at a junction $l$ according to Algorithm~\ref{alg:networksimulation} whenever one of the incoming Lagrangian markers changes its value from $\overline{w}=a$ to $\overline{w}=\overline{w}_{l-1}$.} at each merging junction with $t_1^* < t_2^* < \dots t_{10}^{*} < T = 12$ in the free flow scenario. For the congested case, we observe less changes in the pressure constants. For roads $l=1, \dots, 5$, the times $t_l^*$ are identical to the free flow scenario. On road 5, the contact discontinuity interacts with the shock wave downstream. The traffic beyond the shock wave has zero velocity and the propagation of the Lagrangian marker is stopped, see Figure~\ref{img:Sequential_wovertime}. 
	\begin{table}[tbhp]
		{\footnotesize \caption{Adaption times $t_l^*$ and constants $d_l$}
			\label{tab:Sequential_adaptiontimes}
			\begin{center}
				\begin{tabular}{| c| c | c| c| c| c | c| c | c  |c | c |}\hline
					& $t_1^*$ &  $t_2^*$ & $t_3^*$ &  $t_4^*$ &  $t_5^*$ &  $t_6^*$ &  $t_7^*$ &  $t_8^*$ &  $t_9^*$ &  $t_{10}^*$ \\
					\hline
					free flow &  0 & 0.42 & 0.84 & 1.42 & 2.14 &  3.6 & 5.8 & 7.48 & 8.74 & 9.66 \\
					congested &  0 & 0.42 & 0.84 & 1.42 & 2.14 & $\infty$ & $\infty$ & $\infty$ & $\infty$ & $\infty$ \\ \hline
				\end{tabular}\\[5ex]
				
				\begin{tabular}{| c | c| c| c| c | c| c | c  |c | c |} \hline
					$d_1$ &  $d_2$  & $d_3$  &  $d_4$  &  $d_5$  &  $d_6$  &  $d_7$  &  $d_8$  &  $d_9$  &  $d_{10}$  \\
					\hline
					1.0800 & 1.0427 & 1.0241 & 1.0141 & 1.0084 &  1.0051 & 1.0032 & 1.0020 & 1.0012 & 1.0008 \\ \hline
				\end{tabular}
			\end{center}
		}
	\end{table}\\
	The constants $d_l$ of Lemma~\ref{lem:Sequential} are also shown in Table~\ref{tab:Sequential_adaptiontimes}. We observe that $d_l$ approaches the value one already for $N=10$. In the congested case, the constants are identical, but  the change in the Lagrangian marker is not propagated across the junctions $l=6, \dots,10$. Hence,  the pressure law is identical to the initial pressure law on these roads.

	\subsubsection{Comparison to the LWR model}
	We may re-write the LWR model as a special case of the ARZ model. In the LWR model, the velocity function $V_i(\rho) = \vmax (1- \rho_i / \rhomax_i) = (w_{i,0} i-p_{i,0}(\rho)) $ is used as the equilibrium velocity for a fixed value $w_{i,0}$ and fixed pressure $p_{i,0}(\rho) =
	w_{i,0}/\rhomax_i \rho .$  Note that, the value  $w_{i,0}i$ does {\bf not} depend on time and space in contrast to the ARZ model.  
	The network structure induces differences in the solution of the LWR and the AP model, that are not only connected to the modeling of the pressure function after merges, but are due to the coupling conditions in general. Using the supply and demand formulation~\eqref{eq:ADPAR_Demand}-\eqref{eq:ADPAR_Supply},  we couple the 2-1-junction in the LWR model using the demand and supply functions~\eqref{eq:LWR_demandsupply} and use the demand and supply evaluations $D_1 = d_1(\rho_{1,0}), D_2 = d_2(\rho_{2,0}), S_3 = s_3(\rho_{3,0})$ instead of~\eqref{eq:ADPAR_demandsupply_21}. Since there are no contact discontinuities present in the solution to the LWR model, we  compute numerical results of the LWR model with a Godunov scheme using the same numerical grid $\Delta x = 1/100, \Delta t = \Delta x /4$. Figure~\ref{img:Sequential_wovertime} shows the Lagrangian marker on each road at $t=T$. For the AP model, we obtain, as expected, the homogenized Lagrangian marker on each road in the free flow scenario. In the congested scenario, the Lagrangian markers are given by $\overline{w_l}$ on roads $l=1,\dots, 4$ and $\overline{w_5}$ at the beginning of road $5$. On the remaining part, the Lagrangian marker is equal to $a$. In the LWR model, the perturbation of the Lagrangian marker on the incoming road does not affect the solution on the remaining part of the network.

	\begin{figure}[tbhp]
		\begin{subfigure}{0.45\textwidth}
%
%
\begin{tikzpicture}
\setlength\fwidth{0.85\textwidth}
\begin{axis}[%
width=0.951\fwidth,
height=0.75\fwidth,
at={(0\fwidth,0\fwidth)},
scale only axis,
xmin=0,
xmax=11,
xtick={ 0,  1,  2,  3,  4,  5,  6,  7,  8,  9, 10, 11},
xlabel style={font=\color{white!15!black}},
xlabel={$x$},
ymin=0.5,
ymax=2.2,
ylabel style={font=\color{white!15!black}},
ylabel={$w$},
axis background/.style={fill=white},
axis x line*=bottom,
axis y line*=left,
legend style={at={(1.01,0.4)},legend cell align=left, align=left, draw=white!15!black, font=\footnotesize}
]
\addplot [color=black, dashed]
  table[row sep=crcr]{%
0.0199999999999996	1\\
1	1\\
1.02	2\\
11	2\\
};
\addlegendentry{Initial data, $t=0$ };

\addplot [color=black, dashdotted]
  table[row sep=crcr]{%
0.0199999999999996	1\\
1	1\\
1.02	1.2\\
2	1.2\\
2.02	1.36\\
3	1.36\\
3.02	1.488\\
4	1.488\\
4.02	1.5904\\
5	1.5904\\
5.02	1.67232\\
6	1.67232\\
6.02	1.737856\\
7	1.737856\\
7.02	1.7902848\\
8	1.7902848\\
8.02	1.83222784\\
9	1.83222784\\
9.02	1.865782272\\
10	1.865782272\\
10.02	1.8926258176\\
11	1.8926258176\\
};
\addlegendentry{AP (free flow), $t=T$};

\addplot [color=black]
  table[row sep=crcr]{%
0.0199999999999996	1\\
1	1\\
1.02	1.2\\
2	1.2\\
2.02	1.36\\
3	1.36\\
3.02	1.488\\
4	1.488\\
4.02	1.5904\\
5	1.5904\\
5.02	1.67232\\
5.2	1.67232\\
5.22	2\\
11	2\\
};

\addlegendentry{AP (congested), $t=T$};

\end{axis}
\end{tikzpicture}%
			\subcaption{Lagrangian marker (roads 0-10)}
			\label{img:Sequential_wovertime}
		\end{subfigure}
		\begin{subfigure}{0.45\textwidth}
%
%
\begin{tikzpicture}
\setlength\fwidth{0.85\textwidth}
\begin{axis}[%
width=0.951\fwidth,
height=0.75\fwidth,
at={(0\fwidth,0\fwidth)},
scale only axis,
xmin=0,
xmax=10,
xlabel style={font=\color{white!15!black}},
xlabel={time},
ymin=0,
ymax=1.1,
ylabel style={font=\color{white!15!black}},
ylabel={$\rho$},
axis background/.style={fill=white},
axis x line*=bottom,
axis y line*=left,
legend style={at={(0.8,0.2)}, legend cell align=left, align=left, draw=white!15!black, legend columns=2, font=\footnotesize}
]
\addplot [color=black]
  table[row sep=crcr]{%
0	0.300000000000001\\
3.445	0.300000000000001\\
3.4475	0.301812638001405\\
3.45	0.305750693566207\\
3.4525	0.311927284874509\\
3.455	0.320421232425058\\
3.4575	0.331277452946544\\
3.46	0.344505320702117\\
3.465	0.37791223717425\\
3.47	0.419835819617919\\
3.4775	0.494772364994649\\
3.4925	0.651721439673196\\
3.4975	0.695278168468892\\
3.5025	0.731605228370571\\
3.5075	0.760597351739523\\
3.5125	0.782915351065464\\
3.5175	0.799613824017367\\
3.5225	0.811839500944405\\
3.5275	0.820647162646766\\
3.5325	0.826918183939524\\
3.5375	0.83134553165698\\
3.5425	0.834452502663183\\
3.5475	0.836623636201585\\
3.555	0.83870989570234\\
3.5625	0.83991809159269\\
3.575	0.840908037370815\\
3.595	0.841414374145943\\
3.65	0.841562411635875\\
6.48	0.84196599946465\\
6.5	0.842813695898961\\
6.5125	0.843999060847215\\
6.5225	0.845622962268511\\
6.5325	0.848189773415303\\
6.54	0.850992080576374\\
6.5475	0.854791704158488\\
6.555	0.859829652646072\\
6.5625	0.866331719883611\\
6.57	0.87445647874805\\
6.5775	0.884230104788793\\
6.585	0.895487076483652\\
6.62	0.952982654948341\\
6.6275	0.962878193686068\\
6.635	0.971222921787579\\
6.6425	0.978038767429263\\
6.65	0.983460386112117\\
6.6575	0.987680787955279\\
6.665	0.990909186852981\\
6.6725	0.993344136837534\\
6.6825	0.995653160983071\\
6.695	0.997483601896349\\
6.71	0.998716750871282\\
6.7325	0.999546763280007\\
6.7725	0.999933488819476\\
6.93	0.999998208173327\\
10.0025	0.999998219402842\\
};
\addlegendentry{AP}

\addplot [color=black, dashed]
  table[row sep=crcr]{%
0	0.300000000000001\\
7.3925	0.300000000000001\\
7.395	0.304588339348886\\
7.3975	0.315657092289232\\
7.4	0.333940172631301\\
7.4025	0.359747171231955\\
7.405	0.392942879461271\\
7.41	0.478716464168372\\
7.4275	0.821932312800309\\
7.4325	0.885837288669292\\
7.435	0.91000769278256\\
7.4375	0.929638117200854\\
7.44	0.945352187623955\\
7.4425	0.957783830468815\\
7.445	0.967526086848235\\
7.4475	0.97510368361997\\
7.45	0.980962942961494\\
7.4525	0.985472739012241\\
7.455	0.988931515447746\\
7.4575	0.991576921428274\\
7.4625	0.99513439720609\\
7.4675	0.997195351919382\\
7.4725	0.998385138475381\\
7.48	0.999294867197694\\
7.4925	0.999822021533213\\
7.525	0.999993084283842\\
10.0025	0.999997852533166\\
};
\addlegendentry{LWR}

\end{axis}
\end{tikzpicture}%
			\subcaption{Density at node $A$}
			\label{img:Sequential_densityatA}
		\end{subfigure}
		\caption{Lagrangian marker and densities in the sequential network.}
	\end{figure}
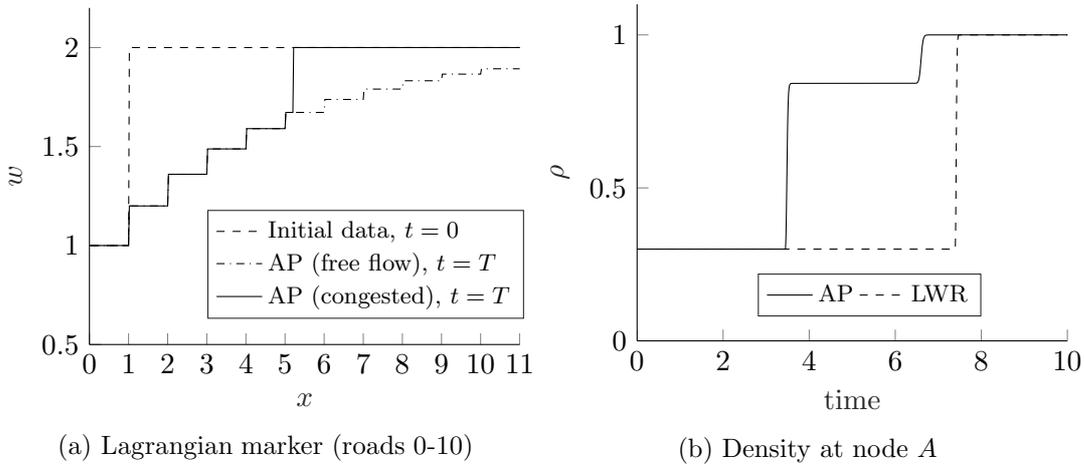

	
	\section{Conclusion}
	We have presented a suitable approximation of the homogenized pressure appearing as necessary coupling conditions for networked ARZ equations.  The novel pressure approximation 
	has been analyzed in detail and allows for an efficient numerical computation even for time--dependent boundary data and complex networks. A numerical method has been proposed to 
	efficiently compute coupled second--order traffic flow models on networks.  The numerical results demonstrate the performance 
	of the proposed procedure and highlight differences to the homogenized pressure and the LWR model predictions.
	
	\section*{Acknowledgment}
	S. G\"ottlich and J. Weissen were supported by the DFG grants GO 1920/7,10 and the DAAD projects 57444394 (USA), 57445223 (France)  
	while M.Herty gratefully acknowledges support through the DFG grants HE 5386/15,18,19, 320021702/GRK2326 and DFG EXC-2023 Internet of Production-390621612.
	The research of S. G\"ottlich and M. Herty was also supported by the joint BMBF grant ENets.
	
	\bibliography{Literature}
\end{document}